\numberwithin{equation}{section}
\numberwithin{theorem}{section}
\numberwithin{lemma}{section}
\numberwithin{remark}{section}
\newtheorem{ass}{Assumption}
\date{}
\begin{document}

\title{Strong approximation for fractional wave equation forced by fractional Brownian motion with Hurst parameter $H\in(0,\frac{1}{2})$}

%\titlerunning{Short form of title}        % if too long for running head

\author{Xing Liu
}

%\authorrunning{Short form of author list} % if too long for running head

\institute{
Xing Liu \at
School of Mathematics and Economics, Bigdata Modeling and Intelligent Computing research institute, Hubei University of Education, Wuhan 430205, People's Republic of China. \email{2718826413@qq.com}
             % Tel.: +123-45-678910\\
%              Fax: +123-45-678910\\
             % \email{wsqlzu@gmail.com}
 %  \emph{Present address:} of F. Author  %  if needed
%\and
%Weihua Deng\at School of Mathematics and Statistics, Gansu Key Laboratory of Applied Mathematics and Complex
%Systems, Lanzhou University, Lanzhou 730000, People's Republic of China.
%\email{dengwh@lzu.edu.cn}
}

\maketitle

\begin{abstract}	
We consider the time discretization of fractional stochastic wave equation with Gaussian noise, which is negatively correlated. Major obstacles to design and analyze time discretization of stochastic wave equation come from the approximation of stochastic convolution with respect to fractional Brownian motion. Firstly, we discuss the smoothing properties of stochastic convolution by using integration by parts and covariance function of fractional Brownian motion. Then the regularity estimates of the mild solution of fractional stochastic wave equation are obtained. Next, we design the time discretization of stochastic convolution by integration by parts. Combining stochastic trigonometric method and approximation of stochastic convolution, the time discretization of stochastic wave equation is achieved. We derive the error estimates of the time discretization. Under certain assumptions, the strong convergence rate of the numerical scheme proposed in this paper can reach $\frac{1}{2}+H$. Finally, the convergence rate and computational efficiency of the numerical scheme are illustrated by numerical experiments.
\\
\\
\keywords{time discretization; stochastic convolution; integration by parts; strong convergence rate}
%\noindent{\bf AMS} 26A33, 35R11, 65M60, 65M12.

% \PACS{PACS code1 \and PACS code2 \and more}
% \subclass{MSC code1 \and MSC code2 \and more}
\subclass{26A33 \and 65M60 \and 65L20 \and65C30}
\end{abstract}

\section{Introduction}
Stochastic partial differential equations (SPDEs) can realistically simulate many phenomena in physical scientific and engineering applications; see \cite{BrunedGabriel,GaoMar,RyserNig}. The theoretical and numerical studies of SPDEs have received much attention \cite{Brehier,BrunedChand,Jentzen,Liu1,SongZh}. While most works of stochastic model in fractional Brownian motion (FBM) are carried out for Hurst parameter $H\in[\frac{1}{2},1)$ (\cite{Anh,Hong,Kamrani,LiWang,Salins}), a FBM with $H\in(0,\frac{1}{2})$ might be more reasonable to model sequences with intermittency and anti-persistence, such as visual feedback effects in biology \cite{Boudrahem} and option prices in market practice \cite{Bayer,Gatheral,Simonsen}. Sometimes the stochastic disturbance of the source in PDEs is anti-correlated, and it can be well modeled by the FBM with $H\in(0,\frac{1}{2})$. In this paper, the following stochastic wave equation (SWE) is considered
 \begin{equation}\label{4eq:1.1}
\mathrm{d}\dot{u}(x,t)=-A^\alpha u(x,t)\mathrm{d}t+f\left(u(x,t)\right)\mathrm{d}t+\mathrm{d}B^Q_H(x,t),\quad (x,t)\in \Omega\times [0,T]
\end{equation}
with homogenous Dirichlet boundary condition
\begin{eqnarray*}
\begin{split}
 u(x,0)=u_0(x), \ \dot{u}(x,0)=v_0(x),\quad x\in \Omega
 \end{split}
  \end{eqnarray*}
 and the initial condition
  \begin{eqnarray*}
\begin{split}
  u(x,t)=0, \quad x\in \partial\Omega\times [0,T].
 \end{split}
  \end{eqnarray*}
Here $A^\alpha$ represents the spectral fractional Laplacian with $A=-\mathrm{\Delta}$ and $\alpha \in (0,1)$. The external fluctuation of \eqref{4eq:1.1} is characterized by FBM $B^Q_H(x,t)$ with $H \in (0,\frac{1}{2})$, $Q$ is a bounded covariance operator. Let $\Omega\subseteq \mathbb{R}^d~(d=1,2,3)$ denote a Lispschitz domain.

For $H\in[\frac{1}{2},1)$, the approximation of \eqref{4eq:1.1} has been well studied. The discrete schemes of \cite{WangGan} strongly converge with order $1-\epsilon$ in time for arbitrarily small $\epsilon> 0$. By using an explicit stochastic position Verlet scheme \cite{Banjai}, linear convergence was obtained under certain assumptions. Using It\^o isometry, authors of \cite{LiuDeng} constructed a modified stochastic trigonometric method, which achieved superlinear convergence in time. The computation of SWE with $H \in (0,\frac{1}{2})$ has so far received little attention. In the case of $H \in (0,\frac{1}{2})$, the above methods are not suitable for dealing with fractional noise of \eqref{4eq:1.1}, because the stochastic process $B^Q_H(x,t)$ is less regular in time. The main challenges lie in time discretization of stochastic convolution with respect to FBM and the corresponding error estimate. Furthermore, we noticed that $f$ usually satisfies Lipschitz continuous condition and linear growth condition in the existing work. In this paper, the linear growth condition is replaced by a weaker condition.

The objective of this paper is to derive an effective numerical scheme of \eqref{4eq:1.1} in time and establish error estimates. The main process is as follows. Firstly, by combining integration by parts, the covariance function of FBM and trigonometric identity, we establish the regularity of mild solution of \eqref{4eq:1.1} including space regularity and time H\"older continuity. After that, we adapt the stochastic trigonometric method to discretize the problem \eqref{4eq:1.1} in time. In order to realize the stochastic trigonometric method, a time discretization of stochastic convolution is designed by integration by parts. Finally, the error estimate of time discretization is derived by using time H\"older continuity of mild solution. In addition, under appropriate conditions of noise the strong convergence rate of order $H+\frac{1}{2}$ is proved by combining integration by parts and covariance function of FBM.

The outline of the rest of this paper is as follows. In section \ref{4sec:2}, we use a SPDE to define the mild solution of SWE \eqref{4eq:1.1}. We present the regularity estimates of the mild solution in section \ref{4sec:3}. In section \ref{4sec:4}, we use integration by parts to reformulate stochastic convolution with respect to FBM into an integral formula, of which it is easy to obtain time discretization of stochastic convolution and corresponding error estimate. Then the discretization of \eqref{4eq:1.1} in time is designed, and the strong convergence order of discretization is derived. We present numerical experiments to confirm the strong convergence order of our numerical scheme in section \ref{4sec:5}. At last, some conclusions are given in section \ref{4sec:6}.

\section{Representation of mild solution} \label{4sec:2}
In this section, we define the mild solution of SWE \eqref{4eq:1.1}. In addition, in order to obtain regularity estimates of the mild solution, several assumptions are introduced.

First, we introduce the norm space $\mathbb{H}^\nu$
\begin{equation*}
\mathbb{H}^\nu=\left\{u:\Omega\to \mathbb{R}\bigg|\ \left\|A^{\frac{\nu}{2}}u\right\|=\left(\int_\Omega\left|A^{\frac{\nu}{2}}u\right|^2\mathrm{d}x\right)^{\frac{1}{2}}<\infty,~~\nu\in\mathbb{R}\right\}.
\end{equation*}
Let $\left\{e_{i}\right\}_{i\in\mathbb{N}}$ be an orthonormal basis of $\mathbb{H}^0$. For $u\in \mathbb{H}^0$, we have
\begin{equation*}
u=\sum^\infty_{i=1}\left\langle u,e_i\right\rangle e_i,
\end{equation*}
where $\left\langle\cdot, \cdot\right\rangle$ denotes the inner product of $\mathbb{H}^0$. Then
\begin{equation*}
 \|u\|^2=\sum^\infty_{i=1}\left\langle u,e_i\right\rangle^2.
\end{equation*}
As $u\in \mathbb{H}^\nu$, using the orthonormal eigenpairs of $A$, we have
\begin{equation*}
A^{\frac{\nu}{2}}u=\sum^\infty_{i=1}\lambda_i^{\frac{\nu}{2}}\left\langle u,\phi_i\right\rangle\phi_i
\end{equation*}
and
\begin{equation*}
 \left\|A^{\frac{\nu}{2}}u\right\|^2=\sum^\infty_{i=1}\lambda_i^{\nu}\left\langle u,\phi_i\right\rangle^2<\infty.
\end{equation*}
%The space $\dot{U}^{\nu}$ is defined as
%\begin{equation*}
%\dot{U}^{\nu}=\left\{u\in U \bigg|~~\|A^{\frac{\nu}{2}}u\|^2<\infty,~~\nu\in\mathbb{R}\right\}.
%\end{equation*}
The $B^Q_H(x,t)$ in \eqref{4eq:1.1} is represented as
\begin{equation*}
B^Q_H(x,t)=\sum^\infty_{i=1}\sqrt{q}_{i}\xi_H^{i}(t)\phi_{i}(x),
\end{equation*}
where $\{\xi_H^{i}(t)\}_{i\in\mathbb{N}}$ are mutually independent real-valued one-dimensional FBM. Its covariance function is given by
\begin{equation*}
\mathrm{E}\left[\xi_H^{i}(t)\xi_H^{i}(s)\right]=\frac{1}{2}\left(t^{2H}+s^{2H}-|t-s|^{2H}\right),
\end{equation*}
for any $s, t\in \mathbb{R}^+$. As $H \in (0,\frac{1}{2})$, the increments of FBM are negatively correlated \cite{AlosMazet1}. Let $u(t)=u(x,t)$ and $B^Q_H(t)=B^Q_H(x,t)$. Next we consider the mild solution of the following SPDE
\begin{equation}\label{4eq:2.1}
\mathrm{d}\left[\begin{split}u(t)\\ \dot{u}(t)\end{split}\right]=\left[\begin{split}0~~~~~I\\ -A^\alpha~~0\end{split}\right]\left[\begin{split}u(t)\\ \dot{u}(t)\end{split}\right]\mathrm{d}t+\left[\begin{split}0~~~~\\ f\left(u(t)\right)\end{split}\right]\mathrm{d}t+\left[\begin{split}0\\ I\end{split}\right]\mathrm{d}B^Q_{H}(t).
\end{equation}
For $t\ge0$, we give the definition of a continuous semigroup $\mathscr{L}(t)$.
\begin{equation}\label{4eq:2.2}
\mathscr{L}(t)=\left[\begin{split}\mathscr{C}(t)~~~~~A^{-\frac{\alpha}{2}}\mathscr{S}(t)\\ -A^{\frac{\alpha}{2}}\mathscr{S}(t)~~~~~~\mathscr{C}(t)\end{split}\right].
\end{equation}
Here $\mathscr{C}(t)=\cos\left(A^{\frac{\alpha}{2}}t\right)$ and $\mathscr{S}(t)=\sin\left(A^{\frac{\alpha}{2}}t\right)$. Using the orthonormal eigenpairs of $A$, we have the following expansions
\begin{equation*}
\cos\left(A^{\frac{\alpha}{2}}t\right)u(t)=\sum^\infty_{i=0}\cos\left(\lambda_i^{\frac{\alpha}{2}}t\right)\left\langle u(t),\phi_i\right\rangle\phi_i
\end{equation*}
and
\begin{equation*}
\sin\left(A^{\frac{\alpha}{2}}t\right)u(t)=\sum^\infty_{i=0}\sin\left(\lambda_i^{\frac{\alpha}{2}}t\right)\left\langle u(t),\phi_i\right\rangle\phi_i.
\end{equation*}
Then using constant variation method to solve \eqref{4eq:2.1}, we have
\begin{equation}\label{4eq:2.3}
\left[\begin{split}u(t)\\ \dot{u}(t)\end{split}\right]=\mathscr{L}(t-s)\left[\begin{split}u(s)\\ \dot{u}(s)\end{split}\right]+\int^t_s\mathscr{L}(t-r)\left[\begin{split}0~~~~\\ f\left(u(r)\right)\end{split}\right]\mathrm{d}r+\int^t_s\mathscr{L}(t-r)\left[\begin{split}0\\ I\end{split}\right]\mathrm{d}B^Q_{H}(r).
\end{equation}
Combining \eqref{4eq:2.2} and the rule of matrix multiplication, \eqref{4eq:2.3} can be written as
\begin{equation}\label{4eq:2.4}
\begin{split}
u(t)=&\mathscr{C}(t-s)u(s)+A^{-\frac{\alpha}{2}}\mathscr{S}(t-s)\dot{u}(s)\\
&~~~~+\int^t_sA^{-\frac{\alpha}{2}}\mathscr{S}(t-r)f\left(u(r)\right)\mathrm{d}r+\int^t_sA^{-\frac{\alpha}{2}}\mathscr{S}(t-r)\mathrm{d}B^Q_H(r),\\
\dot{u}(t)=&-A^{\frac{\alpha}{2}}\mathscr{S}(t-s)u(s)+\mathscr{C}(t-s)\dot{u}(s)\\
&~~~~+\int^t_s\mathscr{C}(t-r)f\left(u(r)\right)\mathrm{d}r+\int^t_s\mathscr{C}(t-r)\mathrm{d}B^Q_H(r).
\end{split}
\end{equation}
The mild solution of SWE \eqref{4eq:1.1} can be defined by \eqref{4eq:2.4}. The regularity of $u(t)$ relies on smoothing properties of $\int^t_sA^{-\frac{\alpha}{2}}\mathscr{S}(t-r)\mathrm{d}B^Q_H(r)$ in the space $L^2(\Omega,\mathbb{H}^\nu)$. For $\nu\in \mathbb{R}$, we equip this space $L^2(\Omega,\mathbb{H}^\nu)$ with the norm
\begin{equation*}
\left\|u\right\|_{L^2\left(\Omega,\mathbb{H}^\nu\right)}=\left(\mathrm{E}\left[\left\|A^{\frac{\nu}{2}}u\right\|^2\right]\right)^{\frac{1}{2}}.
\end{equation*}
In fact, as $0<\nu<\frac{1}{2}$, the regularity of $u(t)$ can also be described by classical fractional Sobolev spaces. Let $W^{v,2}$ denote the fractional Sobolev spaces
\begin{equation*}
W^{v,2}=\left\{u\in U | \ \|u\|_{W^{v,2}}<\infty\right\},
\end{equation*}
where
\begin{equation*}
\|u\|^2_{W^{v,2}}=\left\|u\right\|^2+\int_\Omega\int_\Omega\frac{\left|u(x)-u(y)\right|^2}{\left|x-y\right|^{d+2\nu}}\mathrm{d}x\mathrm{d}y.
\end{equation*}
For $0<\nu<\frac{1}{2}$, $W^{v,2}$ is identical to $\mathbb{H}^\nu$ \cite{McLean}, which implies
\begin{equation}\label{4eq:2.5}
\left\|u\right\|^2_{L^2\left(\Omega,\mathbb{H}^\nu\right)}\lesssim\mathrm{E}\left[\|u\|^2_{W^{v,2}}\right]\lesssim\left\|u\right\|^2_{L^2\left(\Omega,\mathbb{H}^\nu\right)},~~\mathrm{for}~~u\in L^2\left(\Omega,\mathbb{H}^\nu\right).
\end{equation}
In order to establish the well-posedness of \eqref{4eq:1.1}, we require the stochastic process $B^Q_H(x,t)$ to satisfy the following assumption.
\begin{ass}\label{as:2.1}
 The covariance operator $Q$ be a self~adjoint, nonnegative linear operator on $\mathbb{H}^0$ and $Q\phi_{i}(x)=q_{i}\phi_{i}(x)$. Here $q_{i}$ is a non-negative real number. For $-\frac{\alpha}{4}-\frac{H\alpha}{2}<\rho<\frac{\alpha+1}{4}-\frac{H\alpha}{2}$, we assume that $A^{\rho}Q^{\frac{1}{2}}$ is a bounded operator on $\mathbb{H}^0$, i.e.,
\begin{equation*}
\sum_{i=1}^{\infty}\left\|A^{\rho}Q^{\frac{1}{2}}\phi_{i}(x)\right\|^2\lesssim 1.
\end{equation*}
\end{ass}

\begin{ass}\label{as:2.2}
 For $t,~s\in\mathbb{R}$, the function $f: \mathbb{R}\to\mathbb{R}$ satisfies
\begin{equation*}
|f(t)-f(s)|\lesssim |t-s|
\end{equation*}
and
\begin{equation*}
|f(0)|\lesssim 1.
\end{equation*}
\end{ass}

\section{Regularity of the Solution } \label{4sec:3}
In this section, we establish the regularity estimates of the mild solution for SWE \eqref{4eq:1.1}.

Before establishing the regularity estimates of \eqref{4eq:1.1}, we introduce the definition of stochastic integral with respect to the FBM with $H \in (0,\frac{1}{2})$ \cite{Bardina}.
\begin{equation*}
\begin{split}
\int^t_sg(r)\mathrm{d}\xi_H(r)&=\lim_{\delta t\to0}\sum^{n-1}_{j=0}g_j\left(\xi_H(t_{j+1})-\xi_H(t_j)\right),
\end{split}
\end{equation*}
where $\delta t=\max_{0\le j\le n-1}(t_{j+1}-t_j)$, and $s=t_0< t_1<\dots<t_{n}=t$. Then we can get the following integral by using integration by parts.
\begin{equation}\label{4eq:3.5}
\int^t_sg(r)\mathrm{d}\xi_H(r)=g(t)\xi_H(t)-g(s)\xi_H(s)-\int^t_sg'(r)\xi_H(r)\mathrm{d}r.
\end{equation}

Next, we apply \eqref{4eq:3.5} and the covariance function of FBM to derive the regularity estimates of stochastic convolution.

\begin{proposition}\label{4prop:1}
Let $\kappa=\frac{\alpha}{2}+H\alpha+2\rho$ and $\kappa>0$, assumption \ref{as:2.1} holds, then
\begin{equation}\label{4eq:3.5-1}
\begin{split}
\mathrm{E}\left[\left\|\int^t_sA^{\frac{\kappa-\alpha}{2}}\mathscr{S}(t-r)\mathrm{d}B^Q_H(r)\right\|^2\right]+\mathrm{E}\left[\left\|\int^t_sA^{\frac{\kappa-\alpha}{2}}\mathscr{C}(t-r)\mathrm{d}B^Q_H(r)\right\|^2\right]\lesssim1.
\end{split}
\end{equation}
Furthermore
\begin{equation}\label{4eq:3.5-2}
\begin{split}
\mathrm{E}\left[\left\|A^{\frac{-\alpha}{2}}\int^t_s\mathscr{S}(t-r)\mathrm{d}B^Q_H(r)\right\|^2\right]\lesssim(t-s)^{\min\left\{\frac{2\kappa}{\alpha},2\right\}}.
\end{split}
\end{equation}

\end{proposition}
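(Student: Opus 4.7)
My plan is to reduce the vector-valued stochastic integrals to one-dimensional Wiener integrals via the spectral decomposition, apply the integration-by-parts identity \eqref{4eq:3.5} to convert each scalar stochastic integral into a boundary term plus a pathwise Riemann integral, and then compute the expectations explicitly using the FBM covariance $\mathrm{E}[\xi_H^i(r)\xi_H^i(r')]=\tfrac12(r^{2H}+r'^{2H}-|r-r'|^{2H})$. Using $A\phi_i=\lambda_i\phi_i$ and the series expansion of $B^Q_H$, I first write
\begin{equation*}
\int_s^t A^{\frac{\kappa-\alpha}{2}}\mathscr{S}(t-r)\,\mathrm{d}B^Q_H(r)=\sum_{i=1}^\infty\sqrt{q_i}\,\lambda_i^{\frac{\kappa-\alpha}{2}}\,I_i^{\mathscr{S}}\,\phi_i,\qquad I_i^{\mathscr{S}}:=\int_s^t\sin\!\big(\lambda_i^{\alpha/2}(t-r)\big)\,\mathrm{d}\xi_H^i(r),
\end{equation*}
with an analogous $\cos$-version $I_i^{\mathscr{C}}$ for $\mathscr{C}$. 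By orthonormality of $\{\phi_i\}$ and independence of $\{\xi_H^i\}$, both $\mathbb{H}^0$-norm expectations in \eqref{4eq:3.5-1} collapse to weighted sums of $\mathrm{E}[(I_i^{\mathscr{S}})^2]$ and $\mathrm{E}[(I_i^{\mathscr{C}})^2]$.

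Applying \eqref{4eq:3.5} with $\sin 0 = 0$ then gives
\begin{equation*}
I_i^{\mathscr{S}}=-\sin\!\big(\lambda_i^{\alpha/2}(t-s)\big)\xi_H^i(s)+\lambda_i^{\alpha/2}\!\int_s^t\cos\!\big(\lambda_i^{\alpha/2}(t-r)\big)\xi_H^i(r)\,\mathrm{d}r,
\end{equation*}
and a similar expression for $I_i^{\mathscr{C}}$ with an additional surviving boundary term $\xi_H^i(t)$ coming from $\cos 0=1$. Squaring, expanding cross terms, exchanging expectation with integration, and substituting the covariance formula, the second moment of each $I_i$ becomes a combination of one- and two-dimensional Riemann integrals against $r^{2H}+r'^{2H}-|r-r'|^{2H}$. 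The symmetric pieces $r^{2H}$ and $r'^{2H}$ factorise into products of one-dimensional integrals of $\cos$, each of which is controlled via its antiderivative $\lambda_i^{-\alpha/2}\sin(\lambda_i^{\alpha/2}(\cdot))$. For the $|r-r'|^{2H}$ piece, I would use the product-to-sum identity $2\cos a\cos b=\cos(a-b)+\cos(a+b)$ to reduce the double oscillation to a single one, and then integrate once more against the fractional kernel, which is integrable since $2H>-1$.

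Summing over $i$ and invoking Assumption~\ref{as:2.1}, which provides $\sum_i q_i\lambda_i^{2\rho}\lesssim 1$ for $\rho$ in the prescribed range, closes \eqref{4eq:3.5-1}: the identity $\kappa-\alpha=-\alpha/2+H\alpha+2\rho$ ensures that the spectral weights $q_i\lambda_i^{\kappa-\alpha}$ produced by the expansion, after absorbing the growth bound $\mathrm{E}[(I_i^{\mathscr{S}})^2]\lesssim\lambda_i^{\alpha(1/2-H)}$ obtained above, match precisely the summable sequence $q_i\lambda_i^{2\rho}$; the $\mathscr{C}$ case is identical after absorbing the bounded quantity $\mathrm{E}[(\xi_H^i(t))^2]=t^{2H}\le T^{2H}$.

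For \eqref{4eq:3.5-2} the identical integration-by-parts decomposition is used, but now the extra factor $(t-s)^{\min\{2\kappa/\alpha,2\}}$ is extracted by the elementary interpolation $|\sin x|\le|x|^{\theta}$ valid for $\theta\in[0,1]$. Taking $\theta=\min\{\kappa/\alpha,1\}$ turns each occurrence of $\sin(\lambda_i^{\alpha/2}(t-s))$ into $\lambda_i^{\theta\alpha/2}(t-s)^{\theta}$; the Riemann-integral term is treated in parallel by applying the same interpolation inside its antiderivative. Pairing with the outer $\lambda_i^{-\alpha}$ and the spectral weights then produces $q_i\lambda_i^{2\rho}$ (times $(t-s)^{2\theta}$) and Assumption~\ref{as:2.1} gives summability. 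The main technical obstacle throughout is the double integral with the $-|r-r'|^{2H}$ kernel: a naive Cauchy--Schwarz leaves a spurious $\lambda_i^{\alpha}$ from the $\lambda_i^{\alpha/2}$ pulled out of $I_i$, which would not be summable against Assumption~\ref{as:2.1}. The remedy is precisely the product-to-sum identity above, which uncovers the cancellation intrinsic to the negative-correlation regime $H<\tfrac12$ and provides the sharp power of $\lambda_i$ required for summability.
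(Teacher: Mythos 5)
Your overall strategy --- spectral reduction to scalar FBM integrals, the integration-by-parts formula \eqref{4eq:3.5}, expansion of second moments via the covariance function, trigonometric identities to expose oscillatory cancellation, the interpolation $|\sin x|\lesssim|x|^\theta$ for the H\"older bound \eqref{4eq:3.5-2}, and Assumption \ref{as:2.1} for summability --- is exactly the paper's route, and your target bound $\mathrm{E}[(I_i^{\mathscr{S}})^2]\lesssim\lambda_i^{\alpha(1/2-H)}$ is the correct one. One structural difference: the paper estimates the two expectations in \eqref{4eq:3.5-1} \emph{jointly}, so that the two double integrals combine as $\cos a\cos b+\sin a\sin b=\cos(a-b)$ and no sum-frequency term ever appears; your separate product-to-sum treatment leaves a $\cos\left(\lambda_i^{\alpha/2}(2t-r-r_1)\right)$ term that requires an additional oscillatory estimate (it is harmless after two more integrations by parts, but it is extra work the paper's pairing avoids, and it is precisely why the proposition is stated for the sum of the two expectations).

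The substantive gap is in your treatment of the reduced kernel term. After the product-to-sum step you are left with $\int_s^t\int_s^r\cos\left(\lambda_i^{\alpha/2}(r-r_1)\right)(r-r_1)^{2H}\,\mathrm{d}r_1\mathrm{d}r$, and you justify its control by saying the fractional kernel ``is integrable since $2H>-1$.'' Integrability alone yields only an $O(1)$ bound in $\lambda_i$, which leaves the weight $q_i\lambda_i^{\kappa}$ --- not summable under Assumption \ref{as:2.1} (that would require $\kappa\le 2\rho$, i.e.\ $\frac{\alpha}{2}+H\alpha\le0$, which is false). The decay you actually need, $\lambda_i^{-\frac{\alpha}{2}-H\alpha}$, comes from two further steps your sketch omits: one more integration by parts in $r_1$ (costing a factor $\lambda_i^{-\alpha/2}$ and turning the kernel into $(r-r_1)^{2H-1}$), followed by the explicit evaluation $\int_0^\infty\sin\left(\lambda_i^{\alpha/2}l\right)l^{2H-1}\mathrm{d}l=\Gamma(2H)\lambda_i^{-H\alpha}\sin(H\pi)$ together with control of the truncated tail $\int_{r-s}^{\infty}$. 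Only this combination converts $q_i\lambda_i^{\kappa}$ into the summable $q_i\lambda_i^{2\rho}=\left\|A^{\rho}Q^{\frac{1}{2}}\phi_i\right\|^2$; this is the fourth inequality in the paper's proof and is the crux of the whole proposition, so it must be carried out explicitly rather than asserted via the final exponent.
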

\begin{proof}
Term-by-term integration by parts for $\int^t_sA^{-\frac{\alpha}{2}}\mathscr{S}(t-r)\mathrm{d}B^Q_H(r)$ and $\int^t_s\mathscr{S}(t-r)\mathrm{d}B^Q_H(r)$ shows that
\begin{equation}\label{4eq:3.6}
\begin{split}
\int^t_sA^{\frac{\kappa-\alpha}{2}}\mathscr{S}(t-r)\mathrm{d}B^Q_H(r)=-A^{\frac{\kappa-\alpha}{2}}\mathscr{S}(t-s)B^Q_H(s)+\int^t_sA^{\frac{\kappa}{2}}\mathscr{C}(t-r)B^Q_H(r)\mathrm{d}r
\end{split}
\end{equation}
and
\begin{equation}\label{4eq:3.6-1}
\begin{split}
\int^t_s\mathscr{C}(t-r)\mathrm{d}B^Q_H(r)=B^Q_H(t)-\mathscr{C}(t-s)B^Q_H(s)-\int^t_sA^{\frac{\alpha}{2}}\mathscr{S}(t-r)B^Q_H(r)\mathrm{d}r.
\end{split}
\end{equation}
Then using \eqref{4eq:3.6}, we get
\begin{equation}\label{4eq:3.7}
\begin{split}
&\mathrm{E}\left[\left\|\int^t_sA^{\frac{\kappa-\alpha}{2}}\mathscr{S}(t-r)\mathrm{d}B^Q_H(r)\right\|^2\right]\\
&\lesssim\mathrm{E}\left[\left\|A^{\frac{\kappa-\alpha}{2}}\sin\left(A^{\frac{\alpha}{2}}(t-s)\right)B^Q_H(s)\right\|^2\right]+\mathrm{E}\left[\left\|\int^t_sA^{\frac{\kappa}{2}}\cos\left(A^{\frac{\alpha}{2}}(t-r)\right)B^Q_H(r)\mathrm{d}r\right\|^2\right]\\
&=\sum^\infty_{i=1}\lambda_i^{\kappa-\alpha}q_{i}\sin^2\left(\lambda_i^{\frac{\alpha}{2}}(t-s)\right) s^{2H}\\
&~~~~+\sum^\infty_{i=1}\lambda_i^{\kappa}q_{i}\int^t_s\int^t_s \cos\left(\lambda_i^{\frac{\alpha}{2}}(t-r)\right)\cos\left(\lambda_i^{\frac{\alpha}{2}}(t-r_1)\right)\frac{r^{2H}+r_1^{2H}}{2}\mathrm{d}r\mathrm{d}r_1\\
&~~~~-\sum^\infty_{i=1}\lambda_i^{\kappa}q_{i}\int^t_s\int^t_s \cos\left(\lambda_i^{\frac{\alpha}{2}}(t-r)\right)\cos\left(\lambda_i^{\frac{\alpha}{2}}(t-r_1)\right)\frac{|r-r_1|^{2H}}{2}\mathrm{d}r\mathrm{d}r_1\\
&=\sum^\infty_{i=1}\lambda_i^{\kappa-\alpha}q_i\sin^2\left(\lambda_i^{\frac{\alpha}{2}}(t-s)\right) s^{2H}\\
&~~~~+\sum^\infty_{i=1}\lambda_i^{\kappa-\frac{\alpha}{2}}q_i\sin\left(\lambda_i^{\frac{\alpha}{2}}(t-s)\right)\int^t_s \cos\left(\lambda_i^{\frac{\alpha}{2}}(t-r)\right)r^{2H}\mathrm{d}r\\
&~~~~-\sum^\infty_{i=1}\lambda_i^{\kappa}q_i\int^t_s\int^r_{s} \cos\left(\lambda_i^{\frac{\alpha}{2}}(t-r)\right)\cos\left(\lambda_i^{\frac{\alpha}{2}}(t-r_1)\right)(r-r_1)^{2H}\mathrm{d}r_1\mathrm{d}r\\
&\lesssim\sum^\infty_{i=1}\lambda_i^{\kappa-\alpha}q_is^{2H}+2H\sum^\infty_{i=1}\lambda_i^{\kappa-\alpha}q_i\sin\left(\lambda_i^{\frac{\alpha}{2}}(t-s)\right)\int^t_s \sin\left(\lambda_i^{\frac{\alpha}{2}}(t-r)\right)r^{2H-1}\mathrm{d}r\\
&~~~~-\sum^\infty_{i=1}\lambda_i^{\kappa}q_i\int^t_s\int^r_{s} \cos\left(\lambda_i^{\frac{\alpha}{2}}(t-r)\right)\cos\left(\lambda_i^{\frac{\alpha}{2}}(t-r_1)\right)(r-r_1)^{2H}\mathrm{d}r_1\mathrm{d}r.
\end{split}
\end{equation}
The first inequality is due to thetrigonometric inequality and the covariance function of FBM. The second equality is due to the integration by parts. Similarly, we have
\begin{equation}\label{4eq:3.7-1}
\begin{split}
&\mathrm{E}\left[\left\|\int^t_sA^{\frac{\kappa-\alpha}{2}}\mathscr{C}(t-r)\mathrm{d}B^Q_H(r)\right\|^2\right]\\
&\lesssim\sum^\infty_{i=1}\lambda_i^{\kappa-\alpha}q_{i}t^{2H}\\
&~~~~+\sum^\infty_{i=1}\lambda_i^{\kappa}q_{i}\int^t_s\int^t_s \sin\left(\lambda_i^{\frac{\alpha}{2}}(t-r)\right)\sin\left(\lambda_i^{\frac{\alpha}{2}}(t-r_1)\right)\frac{r^{2H}+r_1^{2H}-|r-r_1|^{2H}}{2}\mathrm{d}r\mathrm{d}r_1\\
&=\sum^\infty_{i=1}\lambda_i^{\kappa-\alpha}q_it^{2H}+\sum^\infty_{i=1}\lambda_i^{\kappa-\frac{\alpha}{2}}q_i\left(1-\cos\left(\lambda_i^{\frac{\alpha}{2}}(t-s)\right)\right)\int^t_s \sin\left(\lambda_i^{\frac{\alpha}{2}}(t-r)\right)r^{2H}\mathrm{d}r\\
&~~~~-\sum^\infty_{i=1}\lambda_i^{\kappa}q_i\int^t_s\int^r_{s} \sin\left(\lambda_i^{\frac{\alpha}{2}}(t-r)\right)\sin\left(\lambda_i^{\frac{\alpha}{2}}(t-r_1)\right)(r-r_1)^{2H}\mathrm{d}r_1\mathrm{d}r\\
&\lesssim\sum^\infty_{i=1}\lambda_i^{\kappa-\alpha}q_it^{2H}-\sum^\infty_{i=1}\lambda_i^{\kappa-\alpha}q_i\left(1-\cos\left(\lambda_i^{\frac{\alpha}{2}}(t-s)\right)\right)\int^t_s \cos\left(\lambda_i^{\frac{\alpha}{2}}(t-r)\right)r^{2H-1}\mathrm{d}r\\
&~~~~-\sum^\infty_{i=1}\lambda_i^{\kappa}q_i\int^t_s\int^r_{s} \sin\left(\lambda_i^{\frac{\alpha}{2}}(t-r)\right)\sin\left(\lambda_i^{\frac{\alpha}{2}}(t-r_1)\right)(r-r_1)^{2H}\mathrm{d}r_1\mathrm{d}r.
\end{split}
\end{equation}
Combining \eqref{4eq:3.7} and \eqref{4eq:3.7-1} leads to
\begin{equation*}
\begin{split}
&\mathrm{E}\left[\left\|\int^t_sA^{\frac{\kappa-\alpha}{2}}\mathscr{S}(t-r)\mathrm{d}B^Q_H(r)\right\|^2\right]+\mathrm{E}\left[\left\|\int^t_sA^{\frac{\kappa-\alpha}{2}}\mathscr{C}(t-r)\mathrm{d}B^Q_H(r)\right\|^2\right]\\
&\lesssim\sum^\infty_{i=1}\lambda_i^{\kappa-\alpha}q_it^{2H}\\
&~~~~-\sum^\infty_{i=1}\lambda_i^{\kappa}q_i\int^t_s\int^r_{s} \cos\left(\lambda_i^{\frac{\alpha}{2}}(t-r)\right)\cos\left(\lambda_i^{\frac{\alpha}{2}}(t-r_1)\right)(r-r_1)^{2H}\mathrm{d}r_1\mathrm{d}r\\
&~~~~-\sum^\infty_{i=1}\lambda_i^{\kappa}q_i\int^t_s\int^r_{s} \sin\left(\lambda_i^{\frac{\alpha}{2}}(t-r)\right)\sin\left(\lambda_i^{\frac{\alpha}{2}}(t-r_1)\right)(r-r_1)^{2H}\mathrm{d}r_1\mathrm{d}r\\
&\lesssim\sum^\infty_{i=1}\lambda_i^{\kappa-\alpha}q_{i}t^{2H}-\sum^\infty_{i=1}\lambda_i^{\kappa}q_i\int^t_s\int^r_{s}\cos\left(\lambda_i^{\frac{\alpha}{2}}(r-r_1)\right)(r-r_1)^{2H}\mathrm{d}r_1\mathrm{d}r\\
&=\sum^\infty_{i=1}\lambda_i^{\kappa-\alpha}q_{i}t^{2H}-\sum^\infty_{i=1}\lambda_i^{\kappa-\frac{\alpha}{2}}q_i\int^t_s\sin\left(\lambda_i^{\frac{\alpha}{2}}(r-s)\right)(r-s)^{2H}\mathrm{d}r\\
&~~~~+2H\sum^\infty_{i=1}\lambda_i^{\kappa-\frac{\alpha}{2}}q_i\int^t_s\int^r_{s}\sin\left(\lambda_i^{\frac{\alpha}{2}}(r-r_1)\right)(r-r_1)^{2H-1}\mathrm{d}r_1\mathrm{d}r\\
&\lesssim\sum^\infty_{i=1}\lambda_i^{\kappa-\alpha}q_{i}t^{2H}+\sum^\infty_{i=1}\lambda_i^{\kappa-\alpha}q_i\cos\left(\lambda_i^{\frac{\alpha}{2}}(t-s)\right)(t-s)^{2H}\\
&~~~~-2H\sum^\infty_{i=1}\lambda_i^{\kappa-\alpha}q_i\int^t_s\cos\left(\lambda_i^{\frac{\alpha}{2}}(r-s)\right)(r-s)^{2H-1}\mathrm{d}r\\
&~~~~+2H\sum^\infty_{i=1}\lambda_i^{\kappa-\frac{\alpha}{2}}q_i\int^t_s\int^{\infty}_{0}\sin\left(\lambda_i^{\frac{\alpha}{2}}l\right)l^{2H-1}\mathrm{d}l\mathrm{d}r\\
&~~~~-2H\sum^\infty_{i=1}\lambda_i^{\kappa-\frac{\alpha}{2}}q_i\int^t_s\int^{\infty}_{r-s}\sin\left(\lambda_i^{\frac{\alpha}{2}}l\right)l^{2H-1}\mathrm{d}l\mathrm{d}r\\
&\lesssim\sum^\infty_{i=1}\lambda_i^{\kappa-\alpha}q_{i}t^{2H}+2H\sum^\infty_{i=1}\lambda_i^{\kappa-\frac{\alpha}{2}}q_i\int^t_s\lambda_i^{-H\alpha}\Gamma(2H)\sin(H\alpha)\mathrm{d}r\\
&~~~~-2H\sum^\infty_{i=1}\lambda_i^{\kappa-\alpha}q_i\int^t_s\cos\left(\lambda_i^{\frac{\alpha}{2}}(r-s)\right)(r-s)^{2H-1}\mathrm{d}r\\
&~~~~-2H(2H-1)\sum^\infty_{i=1}\lambda_i^{\kappa-\alpha}q_i\int^t_s\int^{\infty}_{r-s}\cos\left(\lambda_i^{\frac{\alpha}{2}}l\right)l^{2H-2}\mathrm{d}l\mathrm{d}r\\
&\lesssim\sum^\infty_{i=1}\lambda_i^{\kappa-\frac{\alpha}{2}-H\alpha}q_i\\
&=\sum_{i=1}^{\infty}\left\|A^{\rho}Q^{\frac{1}{2}}\phi_{i}(x)\right\|^2\\
&\lesssim1.
\end{split}
\end{equation*}
In the fourth inequality, we use the following equation. Thanks to assumption \ref{as:2.1}, we arrive at the last inequality.
\begin{equation*}
\int^\infty_0\sin\left(al\right)l^{\theta-1}\mathrm{d}l=\frac{\Gamma(\theta)}{a^\theta}\sin(\frac{\theta\pi}{2}),
\end{equation*}
where $0<\theta<1$, $a>0$.

Next, we turn to the H\"older regularity estimates of stochastic convolution. For $0<\kappa\le H\alpha$, we obtain the following estimates by using \eqref{4eq:3.7}.
\begin{equation*}
\begin{split}
&\mathrm{E}\left[\left\|\int^t_sA^{\frac{-\alpha}{2}}\mathscr{S}(t-r)\mathrm{d}B^Q_H(r)\right\|^2\right]\\
&\lesssim\sum^\infty_{i=1}\lambda_i^{-\alpha}q_i\sin^2\left(\lambda_i^{\frac{\alpha}{2}}(t-s)\right) s^{2H}\\
&~~~~+\sum^\infty_{i=1}\lambda_i^{-\frac{\alpha}{2}}q_i\sin\left(\lambda_i^{\frac{\alpha}{2}}(t-s)\right)\int^t_s \cos\left(\lambda_i^{\frac{\alpha}{2}}(t-r)\right)r^{2H}\mathrm{d}r\\
&~~~~-\sum^\infty_{i=1}q_i\int^t_s\int^r_{s} \cos\left(\lambda_i^{\frac{\alpha}{2}}(t-r)\right)\cos\left(\lambda_i^{\frac{\alpha}{2}}(t-r_1)\right)(r-r_1)^{2H}\mathrm{d}r_1\mathrm{d}r\\
&\lesssim2\sum^\infty_{i=1}\lambda_i^{-\alpha}q_{i}\sin^2\left(\lambda_i^{\frac{\alpha}{2}}(t-s)\right)s^{2H}\\
&~~~~+2H\sum^\infty_{i=1}\lambda_i^{-\alpha}q_{i}\sin\left(\lambda_i^{\frac{\alpha}{2}}(t-s)\right)\int^t_s\sin\left(\lambda_i^{\frac{\alpha}{2}}(t-r)\right)r^{2H-1}\mathrm{d}r\\
&~~~~-2H\sum^\infty_{i=1}\lambda_i^{-\frac{\alpha}{2}}q_{i}\int^t_s\int^r_{s} \sin\left(\lambda_i^{\frac{\alpha}{2}}(t-r)\right)\cos\left(\lambda_i^{\frac{\alpha}{2}}(t-r_1)\right)(r-r_1)^{2H-1}\mathrm{d}r_1\mathrm{d}r\\
&\lesssim(t-s)^{\frac{2\kappa}{\alpha}}\sum^\infty_{i=1}\lambda_i^{\kappa-\alpha}q_{i}+\sum^\infty_{i=1}\lambda_i^{-\alpha}q_{i}\left(\lambda_i^{\frac{\alpha}{2}}(t-s)\right)^{\frac{\kappa}{\alpha}}\int^t_s\left(\lambda_i^{\frac{\alpha}{2}}(t-r)\right)^{\frac{\kappa}{\alpha}}r^{2H-1}\mathrm{d}r\\
&~~~~+2H\sum^\infty_{i=1}\lambda_i^{-\alpha}q_{i}\int^t_s \sin\left(\lambda_i^{\frac{\alpha}{2}}(t-r)\right)\left(\sin\left(\lambda_i^{\frac{\alpha}{2}}(t-s)\right)-\sin\left(\lambda_i^{\frac{\alpha}{2}}(t-r)\right)\right)(r-s)^{2H-1}\mathrm{d}r\\
&~~~~+2H(2H-1)\sum^\infty_{i=1}\lambda_i^{-\alpha}q_{i}\int^t_s\int^r_s \sin\left(\lambda_i^{\frac{\alpha}{2}}(t-r)\right)\\
&~~~~\times\left(\sin\left(\lambda_i^{\frac{\alpha}{2}}(t-r_1)\right)-\sin\left(\lambda_i^{\frac{\alpha}{2}}(t-r)\right)\right)(r-r_1)^{2H-2}\mathrm{d}r_1\mathrm{d}r\\
&\lesssim(t-s)^{\frac{2\kappa}{\alpha}}\sum^\infty_{i=1}\lambda_i^{\kappa-\alpha}q_{i}+\sum^\infty_{i=1}\lambda_i^{-\alpha}q_{i}\int^t_s\int^r_s \left(\lambda_i^{\frac{\alpha}{2}}(r-r_1)\right)^{\frac{2\kappa+\alpha-2H\alpha}{\alpha}}(r-r_1)^{2H-2}\mathrm{d}r_1\mathrm{d}r\\
%&\lesssim(t-s)^{\frac{2\kappa}{\alpha}}\sum^\infty_{i=1}\lambda_i^{\kappa-\frac{\alpha}{2}-H\alpha}q_{i}\\
&\lesssim(t-s)^{\frac{2\kappa}{\alpha}}\sum_{i=1}^{\infty}\left\|A^{\rho}Q^{\frac{1}{2}}\phi_{i}(x)\right\|^2.
\end{split}
\end{equation*}
As $\kappa>H\alpha$, similarly, we have
\begin{equation*}
\begin{split}
&\mathrm{E}\left[\left\|\int^t_sA^{\frac{-\alpha}{2}}\mathscr{S}(t-r)\mathrm{d}B^Q_H(r)\right\|^2\right]\\
&\lesssim(t-s)^{\min\{\frac{2\kappa}{\alpha},2\}}\sum^\infty_{i=1}\lambda_i^{\kappa-\alpha}q_{i}+\sum^\infty_{i=1}\lambda_i^{-\alpha}q_{i}\left(\lambda_i^{\frac{\alpha}{2}}(t-s)\right)^{\min\{\frac{2\kappa}{\alpha},2\}}\int^t_sr^{2H-1}\mathrm{d}r\\
&~~~~+\sum^\infty_{i=1}\lambda_i^{-\alpha}q_{i}\left(\lambda_i^{\frac{\alpha}{2}}(t-s)\right)^{\min\{\frac{2\kappa}{\alpha},2\}}\int^t_s(r-s)^{2H-1}\mathrm{d}r\\
&~~~~+\sum^\infty_{i=1}\lambda_i^{-\alpha}q_{i}\int^t_s\int^r_s \left(\lambda_i^{\frac{\alpha}{2}}(t-r)\right)^{\min\{\frac{2\kappa-2H\alpha}{\alpha},1\}}\left(\lambda_i^{\frac{\alpha}{2}}(r-r_1)\right)(r-r_1)^{2H-2}\mathrm{d}r_1\mathrm{d}r\\
%&\lesssim(t-s)^{\min\{\frac{2\kappa}{\alpha},2\}}\sum^\infty_{i=1}\lambda_i^{\kappa-\alpha}q_{i}+(t-s)^{\min\{\frac{2\kappa}{\alpha},1+2H\}+1}\sum^\infty_{i=1}\lambda_i^{\kappa-\frac{\alpha}{2}-H\alpha}q_{i}\\
&\lesssim(t-s)^{\min\{\frac{2\kappa}{\alpha},2\}}\sum_{i=1}^{\infty}\left\|A^{\rho}Q^{\frac{1}{2}}\phi_{i}(x)\right\|^2.
\end{split}
\end{equation*}
\end{proof}

\begin{remark}
When $B^Q_H(x,t)=\sum^\infty_{i=1}\xi_H^{i}(t)\phi_{i}(x)$, $\alpha+2H\alpha>d$ and $0<\kappa<-\frac{d}{2}+\frac{\alpha}{2}+H\alpha$, using the above steps, we still obtain inequalities \eqref{4eq:3.5-1} and \eqref{4eq:3.5-2}.

\end{remark}

We derive the regularity estimates of \eqref{4eq:1.1} from \eqref{4eq:2.4} and Proposition \ref{4prop:1}.
\begin{theorem}\label{4th:1}
Suppose $\left\|u_0\right\|_{L^2\left(\Omega,\mathbb{H}^\kappa\right)}<\infty$, $\left\|v_0\right\|_{L^2\left(\Omega,\mathbb{H}^{\kappa-\alpha}\right)}<\infty$, $\kappa=\frac{\alpha}{2}+H\alpha+2\rho$ and $0<\kappa<\alpha+\frac{1}{2}$, assumptions \ref{as:2.1} and \ref{as:2.2} hold. Then
\begin{equation*}
\left\|u(t)\right\|_{L^2(\Omega,\mathbb{H}^\kappa)}+\left\|\dot{u}(t)\right\|_{L^2(\Omega,\mathbb{H}^{\kappa-\alpha})}\lesssim 1+\left\|u_0\right\|_{L^2\left(\Omega,\mathbb{H}^\kappa\right)}+\left\|v_0\right\|_{L^2\left(\Omega,\mathbb{H}^{\kappa-\alpha}\right)}
\end{equation*}
and
\begin{equation*}
\begin{split}
&\left\|u(t)-u(s)\right\|_{L^2(\Omega,\mathbb{H}^0)}\\
&\lesssim(t-s)^{\min\left\{\frac{\kappa}{\alpha},1\right\}}\left(\left\|u_0\right\|_{L^2\left(\Omega,\mathbb{H}^\kappa\right)}+\left\|v_0\right\|_{L^2\left(\Omega,\mathbb{H}^{\kappa-\alpha}\right)}+1\right).
\end{split}
\end{equation*}
\end{theorem}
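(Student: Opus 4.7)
The plan is to bound the four summands of the mild representation~\eqref{4eq:2.4} (specialized to $s=0$) in $L^2(\Omega,\mathbb{H}^\kappa)$ for $u(t)$ and in $L^2(\Omega,\mathbb{H}^{\kappa-\alpha})$ for $\dot u(t)$. For the two homogeneous terms $\mathscr{C}(t)u_0$ and $A^{-\alpha/2}\mathscr{S}(t)v_0$, the spectral representation combined with the pointwise bounds $|\cos(\cdot)|,|\sin(\cdot)|\le 1$ yields $\|u_0\|_{L^2(\Omega,\mathbb{H}^\kappa)}$ and $\|v_0\|_{L^2(\Omega,\mathbb{H}^{\kappa-\alpha})}$ as upper bounds, respectively. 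For the deterministic Duhamel integral, Minkowski's inequality moves the norm inside the integral; Assumption~\ref{as:2.2} together with the norm equivalence~\eqref{4eq:2.5} (valid because $\kappa-\alpha<1/2$) then gives $\|f(u(r))\|_{L^2(\Omega,\mathbb{H}^{\max\{0,\kappa-\alpha\}})}\lesssim 1+\|u(r)\|_{L^2(\Omega,\mathbb{H}^\kappa)}$. Finally, the stochastic convolution is uniformly controlled by~\eqref{4eq:3.5-1} of Proposition~\ref{4prop:1}. An entirely parallel analysis starting from the second line of~\eqref{4eq:2.4} produces the matching estimate for $\dot u(t)$ in $L^2(\Omega,\mathbb{H}^{\kappa-\alpha})$. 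Summing the two bounds and applying Gronwall's inequality closes the first displayed estimate.

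For the H\"older bound, I would apply~\eqref{4eq:2.4} with general $s$, yielding
\begin{equation*}
u(t)-u(s)=[\mathscr{C}(t-s)-I]u(s)+A^{-\frac{\alpha}{2}}\mathscr{S}(t-s)\dot u(s)+\int_s^tA^{-\frac{\alpha}{2}}\mathscr{S}(t-r)f(u(r))\,\mathrm{d}r+\int_s^tA^{-\frac{\alpha}{2}}\mathscr{S}(t-r)\,\mathrm{d}B^Q_H(r),
\end{equation*}
and estimate each of the four pieces in $L^2(\Omega,\mathbb{H}^0)$. For the cosine difference, the elementary spectral inequality $|1-\cos(x)|\lesssim|x|^{\min\{\kappa/\alpha,1\}}$ produces a factor $(t-s)^{\min\{\kappa/\alpha,1\}}$ multiplying $\|u(s)\|_{L^2(\Omega,\mathbb{H}^{\min\{\kappa,\alpha\}})}$, which is already controlled by the first part of the theorem. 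The analogous inequality $|\sin(x)|\lesssim|x|^{\min\{\kappa/\alpha,1\}}$ treats the sine term and leaves a factor proportional to $\|\dot u(s)\|_{L^2(\Omega,\mathbb{H}^{\min\{\kappa,\alpha\}-\alpha})}$. The deterministic integral is of order $(t-s)$, and since $\min\{\kappa/\alpha,1\}\le 1$ this is dominated by $(t-s)^{\min\{\kappa/\alpha,1\}}$ on the bounded interval $[0,T]$. The stochastic convolution contributes exactly the claimed H\"older factor via~\eqref{4eq:3.5-2} of Proposition~\ref{4prop:1}.

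The main obstacle is aligning the regularity indices so that the bounds for $u$ and $\dot u$ decouple enough to be closed via Gronwall, while at the same time verifying boundedness of the Nemytskii operator induced by $f$ on the fractional space $\mathbb{H}^{\max\{0,\kappa-\alpha\}}$; this is precisely where the hypothesis $\kappa<\alpha+\tfrac{1}{2}$ enters, through the norm equivalence~\eqref{4eq:2.5}. A secondary issue is selecting the spectral interpolation exponents for the cosine and sine parts so that the uniform H\"older rate $\min\{\kappa/\alpha,1\}$ emerges consistently for both homogeneous pieces of $u(t)-u(s)$.
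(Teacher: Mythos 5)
Your proposal follows essentially the same route as the paper: the same four-term decomposition of the mild solution \eqref{4eq:2.4}, the same use of Assumption \ref{as:2.2} together with the norm equivalence \eqref{4eq:2.5} to control the Nemytskii term $A^{\frac{\kappa-\alpha}{2}}f(u)$ when $\kappa>\alpha$, Proposition \ref{4prop:1} for the stochastic convolutions, Gronwall for the a priori bound, and the identical splitting with the spectral interpolation bounds $|1-\cos(x)|,|\sin(x)|\lesssim|x|^{\min\{\kappa/\alpha,1\}}$ plus \eqref{4eq:3.5-2} for the H\"older estimate. The argument is correct and matches the paper's proof in all essentials.
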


\begin{proof} We observe the regularity of $u(t)$ in the space $L^2(\Omega,\mathbb{H}^\nu)$. First, we obtain the following inequality by using assumption \ref{as:2.2}.
\begin{equation*}
\begin{split}
\mathrm{E}\left[\left\|f\left(u(t)\right)-f\left(0\right)\right\|^2\right]\lesssim\mathrm{E}\left[\left\|u(t)\right\|^2\right],
\end{split}
\end{equation*}
which implies
\begin{equation}\label{4eq:3.8}
\begin{split}
\mathrm{E}\left[\left\|f\left(u(t)\right)\right\|^2\right]\lesssim\mathrm{E}\left[\left\|u(t)\right\|^2\right]+1.
\end{split}
\end{equation}
Using \eqref{4eq:2.4} leads to
\begin{equation*}
\begin{split}
\mathrm{E}\left[\left\|A^{\frac{\kappa}{2}}u(t)\right\|^2\right]&\lesssim\mathrm{E}\left[\left\|A^{\frac{\kappa}{2}}\mathscr{C}(t)u_0\right\|^2\right]+\mathrm{E}\left[\left\|A^{\frac{\kappa-\alpha}{2}}\mathscr{S}(t)v_0\right\|^2\right]\\
&~~~~+\mathrm{E}\left[\left\|\int^t_0A^{\frac{\kappa-\alpha}{2}}\mathscr{S}(t-r)f\left(u(r)\right)\mathrm{d}r\right\|^2\right]\\
&~~~~+\mathrm{E}\left[\left\|\int^t_0A^{\frac{\kappa-\alpha}{2}}\mathscr{S}(t-r)\mathrm{d}B^Q_H(r)\right\|^2\right]\\
&\lesssim\mathrm{E}\left[\left\|A^{\frac{\kappa}{2}}u_0\right\|^2\right]+\mathrm{E}\left[\left\|A^{\frac{\kappa-\alpha}{2}}v_0\right\|^2\right]\\
&~~~~+\int^t_0\mathrm{E}\left[\left\|A^{\frac{\kappa-\alpha}{2}}f\left(u(r)\right)\right\|^2\right]\mathrm{d}r\\
&~~~~+\mathrm{E}\left[\left\|\int^t_0A^{\frac{\kappa-\alpha}{2}}\mathscr{S}(t-r)\mathrm{d}B^Q_H(r)\right\|^2\right]
\end{split}
\end{equation*}
and
\begin{equation*}
\begin{split}
\mathrm{E}\left[\left\|A^{\frac{\kappa-\alpha}{2}}\dot{u}(t)\right\|^2\right]\lesssim&\mathrm{E}\left[\left\|A^{\frac{\kappa}{2}}u_0\right\|^2\right]+\mathrm{E}\left[\left\|A^{\frac{\kappa-\alpha}{2}}v_0\right\|^2\right]+\int^t_0\mathrm{E}\left[\left\|A^{\frac{\kappa-\alpha}{2}}f\left(u(r)\right)\right\|^2\right]\mathrm{d}r\\
&~~~~+\mathrm{E}\left[\left\|\int^t_0A^{\frac{\kappa-\alpha}{2}}\mathscr{C}(t-r)\mathrm{d}B^Q_H(r)\right\|^2\right].
\end{split}
\end{equation*}
Then
\begin{equation*}
\begin{split}
&\mathrm{E}\left[\left\|A^{\frac{\kappa}{2}}u(t)\right\|^2\right]+\mathrm{E}\left[\left\|A^{\frac{\kappa-\alpha}{2}}\dot{u}(t)\right\|^2\right]\\
&\lesssim\mathrm{E}\left[\left\|A^{\frac{\kappa}{2}}u_0\right\|^2\right]+\mathrm{E}\left[\left\|A^{\frac{\kappa-\alpha}{2}}(t)v_0\right\|^2\right]+\int^t_0\mathrm{E}\left[\left\|A^{\frac{\kappa-\alpha}{2}}f\left(u(r)\right)\right\|^2\right]\mathrm{d}r\\
&~~~~+\mathrm{E}\left[\left\|\int^t_0A^{\frac{\kappa-\alpha}{2}}\mathscr{S}(t-r)\mathrm{d}B^Q_H(r)\right\|^2\right]+\mathrm{E}\left[\left\|\int^t_0A^{\frac{\kappa-\alpha}{2}}\mathscr{C}(t-r)\mathrm{d}B^Q_H(r)\right\|^2\right].
\end{split}
\end{equation*}
Proposition \ref{4prop:1} implies
\begin{equation}\label{4eq:3.9}
\begin{split}
&\mathrm{E}\left[\left\|A^{\frac{\kappa}{2}}u(t)\right\|^2\right]+\mathrm{E}\left[\left\|A^{\frac{\kappa-\alpha}{2}}\dot{u}(t)\right\|^2\right]\\
&\lesssim\mathrm{E}\left[\left\|A^{\frac{\kappa}{2}}u_0\right\|^2\right]+\mathrm{E}\left[\left\|A^{\frac{\kappa-\alpha}{2}}(t)v_0\right\|^2\right]+\int^t_0\mathrm{E}\left[\left\|A^{\frac{\kappa-\alpha}{2}}f\left(u(r)\right)\right\|^2\right]\mathrm{d}r+1.
\end{split}
\end{equation}
Second, we need to discuss the estimate of \eqref{4eq:3.9} in different cases. As $0<\kappa\le \alpha$,  we have
\begin{equation}\label{4eq:3.10}
 \mathrm{E}\left[\int^t_0\left\|A^{\frac{\kappa-\alpha}{2}}f\left(u(r)\right)\right\|^2\right]\mathrm{d}r\lesssim\mathrm{E}\left[\int^t_0\left\|f\left(u(r)\right)\right\|^2\right]\mathrm{d}r.
 \end{equation}
Substituting \eqref{4eq:3.8} into \eqref{4eq:3.10}, we have
 \begin{equation}\label{4eq:3.11}
 \begin{split}
 \int^t_0\mathrm{E}\left[\left\|A^{\frac{\kappa-\alpha}{2}}f\left(u(r)\right)\right\|^2\right]\mathrm{d}r&\lesssim\int^t_0\mathrm{E}\left[\left\|u(r)\right\|^2\right]\mathrm{d}r+1\\
 &\lesssim\int^t_0\mathrm{E}\left[\left\|A^{\frac{\kappa}{2}}u(r)\right\|^2\right]\mathrm{d}r+1.
 \end{split}
 \end{equation}
For $\alpha<\kappa<\alpha+\frac{1}{2}$, due to assumption \ref{as:2.2} and \eqref{4eq:2.5}, the following inequality is easily derived.
\begin{equation}\label{4eq:3.12}
\begin{split}
&\int^t_0\mathrm{E}\left[\left\|A^{\frac{\kappa-\alpha}{2}}f\left(u(x,r)\right)\right\|^2\right]\mathrm{d}r\\
&\lesssim\int^t_0\mathrm{E}\left[\left\|f\left(u(x,r)\right)\right\|_{W^{\kappa-\alpha,2}}^2\right]\mathrm{d}r\\
&=\int^t_0\mathrm{E}\left[\left\|f\left(u(x,r)\right)\right\|^2+\int_\Omega\int_\Omega\frac{\left|f\left(u(x,r)\right)-f\left(u(y,r)\right)\right|^2}{\left|x-y\right|^{d+2\kappa-2\alpha}}\mathrm{d}x\mathrm{d}y\right]\mathrm{d}r\\
&\lesssim\int^t_0\mathrm{E}\left[\left\|u(x,r)\right\|^2+\int_\Omega\int_\Omega\frac{\left|u(x,r)-u(y,r)\right|^2}{\left|x-y\right|^{d+2\kappa-2\alpha}}\mathrm{d}x\mathrm{d}y\right]\mathrm{d}r+1\\
&=\int^t_0\mathrm{E}\left[\left\|u(x,r)\right\|^2_{W^{\kappa-\alpha,2}}\right]\mathrm{d}r+1\\
&\lesssim\int^t_0\mathrm{E}\left[\left\|A^{\frac{\kappa-\alpha}{2}}u(x,r)\right\|^2\right]\mathrm{d}r+1\\
&\lesssim\int^t_0\mathrm{E}\left[\left\|A^{\frac{\kappa}{2}}u(x,r)\right\|^2\right]\mathrm{d}r+1.
\end{split}
\end{equation}
By combining \eqref{4eq:3.9}-\eqref{4eq:3.12}, we deduce that
\begin{equation}\label{4eq:3.13}
\begin{split}
&\mathrm{E}\left[\left\|A^{\frac{\kappa}{2}}u(t)\right\|^2\right]+\mathrm{E}\left[\left\|A^{\frac{\kappa-\alpha}{2}}\dot{u}(t)\right\|^2\right]\\
&\lesssim\mathrm{E}\left[\left\|A^{\frac{\kappa}{2}}u_0\right\|^2\right]+\mathrm{E}\left[\left\|A^{\frac{\kappa-\alpha}{2}}(t)v_0\right\|^2\right]+\int^t_0\mathrm{E}\left[\left\|A^{\frac{\kappa}{2}}u(r)\right\|^2\right]\mathrm{d}r+1.
\end{split}
\end{equation}
Then using the Gronwall inequality, the following equation is obtained
\begin{equation}\label{4eq:3.13-1}
\begin{split}
\mathrm{E}\left[\left\|A^{\frac{\kappa}{2}}u(t)\right\|^2\right]&\lesssim\mathrm{E}\left[\left\|A^{\frac{\kappa}{2}}u_0\right\|^2\right]+\mathrm{E}\left[\left\|A^{\frac{\kappa-\alpha}{2}}v_0\right\|^2\right]+1.
\end{split}
\end{equation}
Finally, combining \eqref{4eq:3.13} and \eqref{4eq:3.13-1}, we deduce that
\begin{equation}\label{4eq:3.14}
\begin{split}
&\mathrm{E}\left[\left\|A^{\frac{\kappa}{2}}u(t)\right\|^2\right]+\mathrm{E}\left[\left\|A^{\frac{\kappa-\alpha}{2}}\dot{u}(t)\right\|^2\right]\\
&\lesssim\mathrm{E}\left[\left\|A^{\frac{\kappa}{2}}u_0\right\|^2\right]+\mathrm{E}\left[\left\|A^{\frac{\kappa-\alpha}{2}}(t)v_0\right\|^2\right]+1.
\end{split}
\end{equation}

Now, we present the H\"older regularity estimates of $u(t)$. Using \eqref{4eq:2.4} and Proposition \ref{4prop:1} yields
\begin{equation}\label{4eq:3.15}
\begin{split}
\mathrm{E}\left[\left\|u(t)-u(s)\right\|^2\right]&\lesssim\mathrm{E}\left[\left\|\mathscr{C}(t-s)u(s)-u(s)\right\|^2\right]+\mathrm{E}\left[\left\|A^{-\frac{\alpha}{2}}\mathscr{S}(t-s)\dot{u}(s)\right\|^2\right]\\
&~~~~+\mathrm{E}\left[\left\|\int^t_sA^{-\frac{\alpha}{2}}\mathscr{S}(t-r)f\left(u(r)\right)\mathrm{d}r\right\|^2\right]\\
&~~~~+\mathrm{E}\left[\left\|\int^t_sA^{-\frac{\alpha}{2}}\mathscr{S}(t-r)\mathrm{d}B^Q_H(r)\right\|^2\right]\\
&\lesssim\mathrm{E}\left[\left\|\left(A^{\frac{\alpha}{2}}(t-s)\right)^{\min\{\frac{\kappa}{\alpha},1\}}u(s)\right\|^2\right]\\
&~~~~+\mathrm{E}\left[\left\|A^{-\frac{\alpha}{2}}\left(A^{\frac{\alpha}{2}}(t-s)\right)^{\min\{\frac{\kappa}{\alpha},1\}}\dot{u}(s)\right\|^2\right]\\
&~~~~+\mathrm{E}\left[\left\|\int^t_sf\left(u(r)\right)\mathrm{d}r\right\|^2\right]+(t-s)^{\min\left\{\frac{2\kappa}{\alpha},2\right\}}.
\end{split}
\end{equation}
Employing \eqref{4eq:3.14} and Cauchy-Schwarz-Buniakowsky inequality, we obtain
\begin{equation}\label{4eq:3.16}
\begin{split}
\mathrm{E}\left[\left\|u(t)-u(s)\right\|^2\right]&\lesssim\left(t-s\right)^{\min\{\frac{2\kappa}{\alpha},2\}}\left(\mathrm{E}\left[\left\|A^{\frac{\kappa}{2}}u_0\right\|^2\right]+\mathrm{E}\left[\left\|A^{\frac{\kappa-\alpha}{2}}(t)v_0\right\|^2\right]+1\right)\\
&~~~~+\left(t-s\right)\int^t_s\mathrm{E}\left[\left\|f\left(u(r)\right)\right\|^2\right]\mathrm{d}r+(t-s)^{\min\left\{\frac{2\kappa}{\alpha},2\right\}}\\
&\lesssim\left(t-s\right)^{\min\{\frac{2\kappa}{\alpha},2\}}\left(\mathrm{E}\left[\left\|A^{\frac{\kappa}{2}}u_0\right\|^2\right]+\mathrm{E}\left[\left\|A^{\frac{\kappa-\alpha}{2}}(t)v_0\right\|^2\right]+1\right).
\end{split}
\end{equation}
\end{proof}

\section{Temporal discretization} \label{4sec:4}
We solve \eqref{4eq:1.1} numerically by discretizing \eqref{4eq:2.3}. To begin, we establish a method for approximating stochastic convolution in \eqref{4eq:2.3}. We define each subinterval $\left(t_{j},t_{j+1}\right]$, and set $t_j=j\tau$ for $j=0,1,2,\cdots, \frac{T}{\tau}-1$. Using the stochastic trigonometric method get the following equation
\begin{equation*}
\begin{split}
\left[\begin{split}
u_{j+1}\\
\bar{u}_{j+1}
\end{split} \right]&=\left[\begin{split}
\mathscr{C}\left(\tau\right)~~~~A^{-\frac{\alpha}{2}}\mathscr{S}\left(\tau\right)\\
-A^{\frac{\alpha}{2}}\mathscr{S}\left(\tau\right)~~~~\mathscr{C}\left(\tau\right)
\end{split} \right]\left[\begin{split}u_{j}\\
\bar{u}_{j}
\end{split} \right]+\tau\left[\begin{split}A^{-\frac{\alpha}{2}}\mathscr{S}\left(\tau\right)f\left(u_{j}\right)\\
\mathscr{C}\left(\tau\right)f\left(u_{j}\right)
\end{split} \right]\\
&+\left[\begin{split}&\int^{t_{j+1}}_{t_{j}}A^{-\frac{\alpha}{2}}\mathscr{S}\left(t_{j+1}-r\right)\mathrm{d}B^Q_H(r)\\
&\int^{t_{j+1}}_{t_{j}}\mathscr{C}\left(t_{j+1}-r\right)\mathrm{d}B^Q_H(r)\\
\end{split}  \right].
\end{split}
\end{equation*}
In order to obtain the error analysis of temporal discretization, we will approximate stochastic convolution using the following method.  Again using integration by parts, we get
\begin{equation}\label{4eq:4.1}
\begin{split}
\int^{t_{j+1}}_{t_{j}}A^{-\frac{\alpha}{2}}\mathscr{S}\left(t_{j+1}-r\right)\mathrm{d}B^Q_H(r)&=-A^{-\frac{\alpha}{2}}\mathscr{S}\left(t_{j+1}-t_{j}\right)B^Q_H(t_{j})\\
&~~~~+\int^{t_{j+1}}_{t_{j}}\mathscr{C}\left(A^{\frac{\alpha}{2}}(t_{j+1}-r)\right)B^Q_H(r)\mathrm{d}r.
\end{split}
\end{equation}
Using \eqref{4eq:4.1} yields the approximation of $\int^{t_{j+1}}_{t_{j}}A^{-\frac{\alpha}{2}}\mathscr{S}\left(t_{j+1}-r\right)\mathrm{d}B^Q_H(r)$
\begin{equation*}
\begin{split}
&-A^{-\frac{\alpha}{2}}\mathscr{S}\left(t_{j+1}-t_j\right)B^Q_H(t_{j})+\int^{t_{j+1}}_{t_{j}}\mathscr{C}\left(t_{j+1}-r\right)B^Q_H(t_j)\mathrm{d}r.\\
\end{split}
\end{equation*}
Using a similar method, we approximate $\int^{t_{j+1}}_{t_{j}}\mathscr{C}\left(t_{j+1}-r\right)\mathrm{d}B^Q_H(r)$.
\begin{equation*}
\begin{split}
&B^Q_H(t_{j+1})-\mathscr{C}\left(t_{j+1}-t_{j}\right)B^Q_H(t_{j})-\int^{t_{j+1}}_{t_{j}}A^{\frac{\alpha}{2}}\mathscr{S}\left(t_{j+1}-r\right)B^Q_H(t_j)\mathrm{d}r.\\
\end{split}
\end{equation*}
The above approximation makes numerical simulations of stochastic convolution easy to implement. Then we get a discretization of \eqref{4eq:2.1}
\begin{equation}\label{4eq:4.2}
\begin{split}
\left[\begin{split}
u_{j+1}\\
\bar{u}_{j+1}
\end{split} \right]&=\left[\begin{split}
\mathscr{C}\left(\tau\right)~~~~A^{-\frac{\alpha}{2}}\mathscr{S}\left(\tau\right)\\
-A^{\frac{\alpha}{2}}\mathscr{S}\left(\tau\right)~~~~\mathscr{C}\left(\tau\right)
\end{split} \right]\left[\begin{split}u_{j}\\
\bar{u}_{j}
\end{split} \right]+\tau\left[\begin{split}A^{-\frac{\alpha}{2}}\mathscr{S}\left(\tau\right)f\left(u_{j}\right)\\
\mathscr{C}\left(\tau\right)f\left(u_{j}\right)
\end{split} \right]\\
&+\left[\begin{split}&-A^{-\frac{\alpha}{2}}\mathscr{S}\left(\tau\right)B^Q_H(t_{j})+\int^{t_{j+1}}_{t_{j}}\mathscr{C}\left(t_{j+1}-r\right)B^Q_H(t_j)\mathrm{d}r\\
&B^Q_H(t_{j+1})-\mathscr{C}\left(\tau\right)B^Q_H(t_{j})-\int^{t_{j+1}}_{t_{j}}A^{\frac{\alpha}{2}}\mathscr{S}\left(t_{j+1}-r\right)B^Q_H(t_j)\mathrm{d}r\\
\end{split}  \right].
\end{split}
\end{equation}
The following equation is obtained by applying trigonometric identities.
\begin{equation*}
\left[\begin{split}
\mathscr{C}\left(\tau\right)~~~~A^{-\frac{\alpha}{2}}\mathscr{S}\left(\tau\right)\\
-A^{\frac{\alpha}{2}}\mathscr{S}\left(\tau\right)~~~~\mathscr{C}\left(\tau\right)
\end{split} \right]^n=\left[\begin{split}
\mathscr{C}\left(t_n\right)~~~~A^{-\frac{\alpha}{2}}\mathscr{S}\left(t_n\right)\\
-A^{\frac{\alpha}{2}}\mathscr{S}\left(t_n\right)~~~~\mathscr{C}\left(t_n\right)
\end{split} \right]
\end{equation*}
and
\begin{equation*}
\left[\begin{split}
\mathscr{C}\left(\tau\right)~~~~A^{-\frac{\alpha}{2}}\mathscr{S}\left(\tau\right)\\
-A^{\frac{\alpha}{2}}\mathscr{S}\left(\tau\right)~~~~\mathscr{C}\left(\tau\right)
\end{split} \right]^i\left[\begin{split}A^{-\frac{\alpha}{2}}\mathscr{S}\left(\tau\right)\\
\mathscr{C}\left(\tau\right)
\end{split} \right]=\left[\begin{split}A^{-\frac{\alpha}{2}}\mathscr{S}\left(t_{i+1}\right)\\
\mathscr{C}\left(t_{i+1}\right)
\end{split} \right].
\end{equation*}
Then using recursion form of \eqref{4eq:4.2}, we get
\begin{equation}\label{4eq:4.3}
\begin{split}
u_{n+1}&=\mathscr{C}\left(t_{n+1}\right)u_0+A^{-\frac{\alpha}{2}}\mathscr{S}\left(t_{n+1}\right)v_{0}+\sum^n_{j=0}\tau A^{-\frac{\alpha}{2}}\mathscr{S}\left(A^{\frac{\alpha}{2}} (t_{n+1}-t_j)\right)f\left(u_{j}\right)\\
&~~~~+\sum^n_{j=0}\int^{t_{j+1}}_{t_{j}}\mathscr{C}\left(t_{n+1}-r\right)B^Q_H(t_j)\mathrm{d}r
\end{split}
\end{equation}
and
\begin{equation}\label{4eq:4.3-1}
\begin{split}
\bar{u}_{n+1}&=-A^{\frac{\alpha}{2}}\mathscr{S}\left(t_{n+1}\right)u_0+\mathscr{C}\left(t_{n+1}\right)v_{0}+\sum^n_{j=0}\tau \mathscr{C}\left(A^{\frac{\alpha}{2}} (t_{n+1}-t_j)\right)f\left(u_{j}\right)\\
&~~~~+B^Q_H(t_{n+1})-\sum^n_{j=0}\int^{t_{j+1}}_{t_{j}}A^{\frac{\alpha}{2}}\mathscr{S}\left(t_{n+1}-r\right)B^Q_H(t_j)\mathrm{d}r.
\end{split}
\end{equation}
Let $e_{n+1}=u(t_{n+1})-u_{n+1}$. Combining \eqref{4eq:2.4} and \eqref{4eq:4.3}, we have
\begin{equation}\label{4eq:4.4}
\begin{split}
e_{n+1}&=\sum^n_{j=0}\int^{t_{j+1}}_{t_{j}} A^{-\frac{\alpha}{2}}\left(\mathscr{S}\left(t_{n+1}-s\right)f\left(u(s)\right)-\mathscr{S}\left(t_{n+1}-t_j\right)f\left(u_{j}\right)\right)\mathrm{d}s\\
&~~~~+\sum^n_{j=0}\int^{t_{j+1}}_{t_{j}} \mathscr{C}\left(t_{n+1}-s\right)\left(B^Q_H(s)-B^Q_H(t_j)\right)\mathrm{d}s.
\end{split}
\end{equation}

\begin{proposition}\label{4prop:2}
Let $\kappa=\frac{\alpha}{2}+H\alpha+2\rho$ and $\kappa>0$, assumption \ref{as:2.1} is satisfied, we have
\begin{equation*}
\begin{split}
&\mathrm{E}\left[\left\|\sum^{j=n-1}_{j=0}\int^{t_{j+1}}_{t_{j}}\mathscr{C}\left(t_n-s\right)\left(B^Q_H(s)-B^Q_H(t_j)\right)\mathrm{d}r\right\|^2\right]\\
&\lesssim\left\{\begin{split}\tau^{\frac{2\kappa}{\alpha}},\quad &0<\kappa\le \frac{\alpha}{2}+H\alpha,\\
\tau^{2H+1},\quad &\kappa> \frac{\alpha}{2}+H\alpha.
\end{split}\right.
\end{split}
\end{equation*}
\end{proposition}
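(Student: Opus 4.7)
The plan is to follow the same spectral-plus-integration-by-parts strategy that produced Proposition \ref{4prop:1}, but carefully track the additional power of $\tau$ coming from the fact that now we integrate against an \emph{increment} $B^Q_H(s)-B^Q_H(t_j)$ on each subinterval $[t_j,t_{j+1}]$. First, I would expand $B^Q_H(s)=\sum_i \sqrt{q_i}\,\xi_H^i(s)\phi_i$ and use Parseval's identity to rewrite the quantity to be bounded as
\begin{equation*}
\sum_{i=1}^{\infty} q_i \sum_{j,k=0}^{n-1}\int_{t_j}^{t_{j+1}}\!\int_{t_k}^{t_{k+1}} \cos\!\bigl(\lambda_i^{\alpha/2}(t_n-s)\bigr)\cos\!\bigl(\lambda_i^{\alpha/2}(t_n-r)\bigr)\,K_{j,k}(s,r)\,\mathrm{d}r\,\mathrm{d}s,
\end{equation*}
where the increment covariance $K_{j,k}(s,r)=\mathrm{E}[(\xi_H^i(s)-\xi_H^i(t_j))(\xi_H^i(r)-\xi_H^i(t_k))]$ can be expanded into four terms of the form $\tfrac12|\cdot|^{2H}$ via the FBM covariance.

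Second, I would integrate by parts in the inner double integral in exactly the way that produced \eqref{4eq:3.7}--\eqref{4eq:3.7-1}, transferring derivatives from the $|\cdot|^{2H}$ kernel onto the cosines and using the identity $\int_0^\infty \sin(al)\,l^{2H-1}\mathrm{d}l=\Gamma(2H)\sin(H\pi)a^{-2H}$ to pull out a factor $\lambda_i^{-H\alpha}$ that combines with $q_i\lambda_i^\kappa$ into $\|A^\rho Q^{1/2}\phi_i\|^2$ and is summable by Assumption \ref{as:2.1}. The dichotomy in the bound reflects two regimes. When $0<\kappa\le\tfrac{\alpha}{2}+H\alpha$, I would use the Taylor-type inequality $|1-\cos(\lambda_i^{\alpha/2}\tau)|\lesssim(\lambda_i^{\alpha/2}\tau)^{2\kappa/\alpha}$, together with analogous bounds for the differences of cosine factors at consecutive grid points, to extract the factor $\tau^{2\kappa/\alpha}\lambda_i^{\kappa}$ needed to obtain the first bound. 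When $\kappa>\tfrac{\alpha}{2}+H\alpha$ the rate saturates at $\tau^{2H+1}$: the variance of $\xi_H^i(s)-\xi_H^i(t_j)$ on $[t_j,t_{j+1}]$ is $\lesssim\tau^{2H}$, each subinterval $s$-integral contributes an extra $\tau^2$, and the negative correlation of FBM increments (using $H<1/2$) keeps the $O(n)$ sum in $j$ under control, yielding the $t_n\tau^{2H+1}$ bound with the remaining eigenmode sum again handled by Assumption \ref{as:2.1}.

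The main obstacle will be the off-diagonal contributions $j\ne k$: a naive pointwise bound $|K_{j,k}|\lesssim\tau^{2H}$ summed over all pairs would lose a factor of $n$ and destroy the rate. I therefore expect that the heart of the proof will be isolating enough cancellation, either by symmetrizing in $(s,r)$ and integrating by parts so that $(r-r_1)^{2H}$ becomes one of the convergent oscillatory tails already treated in Proposition \ref{4prop:1}, or by directly expanding $K_{j,k}=\tfrac12(|s-t_k|^{2H}+|t_j-r|^{2H}-|s-r|^{2H}-|t_j-t_k|^{2H})$ and using this signed structure to get an effective per-pair bound of order $\tau^{2H+2}$ rather than $\tau^{2H}$, after which summation over $(j,k)$ and over $i$ collapses to the $n$-independent control already available from Proposition \ref{4prop:1}.
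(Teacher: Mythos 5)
Your plan coincides with the paper's own proof: it expands in the eigenbasis, splits the double sum into the diagonal part ($J_1$, itself split via the increment covariance into $|s-r|^{2H}$ and $|s-t_j|^{2H}$ pieces) and the off-diagonal part ($J_2$), integrates by parts to move derivatives of the $|\cdot|^{2H}$ kernels onto the trigonometric factors, bounds the resulting sine differences by $(\lambda_i^{\alpha/2}\tau)^{\theta}$ with $\theta$ chosen according to whether $\kappa\le H\alpha$ or not, and handles $j\ne k$ exactly as you anticipate --- rewriting $K_{j,k}$ in its signed form so that two integrations by parts produce the kernel $(2H-1)(s-r_1)^{2H-2}$, whose summability in $k$ (since $2H-2<-1$) reduces the double sum to $O(n)$ terms of size $\tau^{2H+2}$ and yields $T\tau^{2H+1}$. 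The eigenmode sums are absorbed into $\sum_i\lambda_i^{\kappa-\frac{\alpha}{2}-H\alpha}q_i=\sum_i\|A^{\rho}Q^{1/2}\phi_i\|^2\lesssim 1$ via Assumption \ref{as:2.1}, just as you propose.
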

\begin{proof}
Thanks to the orthonormal basis $\left\{\phi_{i}(x)\right\}_{i\in\mathbb{N}}$, the following equation holds.
\begin{equation*}
\begin{split}
&\mathrm{E}\left[\left\|\sum^{j=n-1}_{j=0}\int^{t_{j+1}}_{t_{j}}\mathscr{C}\left(t_n-s\right)\left(B^Q_H(s)-B^Q_H(t_j)\right)\mathrm{d}r\right\|^2\right]\\
&=\sum^\infty_{i=1}q_i\sum^{j=n-1}_{j=0}\int^{t_{j+1}}_{t_{j}}\int^{t_{j+1}}_{t_{j}}\cos\left(\lambda_i^{\frac{\alpha}{2}}(t_n-s)\right)\cos\left(\lambda_i^{\frac{\alpha}{2}}(t_n-r)\right)\\
&~~~~\times\mathrm{E}\left[\left(\xi_H^{i}(s)-\xi_H^{i}(t_j)\right)\left(\xi_H^{i}(r)-\xi_H^{i}(t_j)\right)\right]\mathrm{d}s\mathrm{d}r\\
&~~~~+2\sum^\infty_{i=1}q_i\sum^{j=n-1}_{j=1}\sum^{k=j-1}_{k=0}\int^{t_{j+1}}_{t_{j}}\int^{t_{k+1}}_{t_{k}}\cos\left(\lambda_i^{\frac{\alpha}{2}}(t_n-s)\right)\cos\left(\lambda_i^{\frac{\alpha}{2}}(t_n-r)\right)\\
&~~~~\times\mathrm{E}\left[\left(\xi_H^{i}(s)-\xi_H^{i}(t_j)\right)\left(\xi_H^{i}(r)-\xi_H^{i}(t_k)\right)\right]\mathrm{d}s\mathrm{d}r\\
&=J_1+J_2.
\end{split}
\end{equation*}
For $0<\kappa\le H\alpha$, we give the following estimates of $J_1$ and $J_2$ by using the covariance function of FBM.
\begin{equation*}
\begin{split}
J_1&=\sum^\infty_{i=1}q_i\sum^{j=n-1}_{j=0}\int^{t_{j+1}}_{t_{j}}\int^{t_{j+1}}_{t_{j}}\cos\left(\lambda_i^{\frac{\alpha}{2}}(t_n-s)\right)\cos\left(\lambda_i^{\frac{\alpha}{2}}(t_n-r)\right)\\
&~~~~\times\frac{|r-t_j|^{2H}+|s-t_j|^{2H}-|s-r|^{2H}}{2}\mathrm{d}s\mathrm{d}r\\
&=-\sum^\infty_{i=1}q_i\sum^{j=n-1}_{j=0}\int^{t_{j+1}}_{t_{j}}\int^{s}_{t_{j}}\cos\left(\lambda_i^{\frac{\alpha}{2}}(t_n-s)\right)\cos\left(\lambda_i^{\frac{\alpha}{2}}(t_n-r)\right)(s-r)^{2H}\mathrm{d}s\mathrm{d}r\\
&~~~~+\sum^\infty_{i=1}q_i\sum^{j=n-1}_{j=0}\int^{t_{j+1}}_{t_{j}}\int^{t_{j+1}}_{t_{j}}\cos\left(\lambda_i^{\frac{\alpha}{2}}(t_n-s)\right)\cos\left(\lambda_i^{\frac{\alpha}{2}}(t_n-r)\right)(s-t_j)^{2H}\mathrm{d}s\mathrm{d}r\\
&=J_{11}+J_{12}.
\end{split}
\end{equation*}
By integration by parts, we have
\begin{equation*}
\begin{split}
J_{11}&=\sum^\infty_{i=1}\lambda_i^{-\frac{\alpha}{2}}q_i\sum^{j=n-1}_{j=0}\int^{t_{j+1}}_{t_{j}}\cos\left(\lambda_i^{\frac{\alpha}{2}}(t_n-s)\right)\\
&~~~~\times\left(\sin\left(\lambda_i^{\frac{\alpha}{2}}(t_n-r)\right)-\sin\left(\lambda_i^{\frac{\alpha}{2}}(t_n-t_j)\right)\right)(s-r)^{2H}\mathrm{d}s\bigg|^s_{r=t_j}\\
&~~~~+2H\sum^\infty_{i=1}\lambda_i^{-\frac{\alpha}{2}}q_i\sum^{j=n-1}_{j=0}\int^{t_{j+1}}_{t_{j}}\int^{s}_{t_{j}}\cos\left(\lambda_i^{\frac{\alpha}{2}}(t_n-s)\right)\\
&~~~~\times\left(\sin\left(\lambda_i^{\frac{\alpha}{2}}(t_n-r)\right)-\sin\left(\lambda_i^{\frac{\alpha}{2}}(t_n-t_j)\right)\right)(s-r)^{2H-1}\mathrm{d}s\mathrm{d}r\\
&=-2H\sum^\infty_{i=1}\lambda_i^{-\alpha}q_i\sum^{j=n-1}_{j=0}\int^{t_{j+1}}_{t_{j}}\left(\sin\left(\lambda_i^{\frac{\alpha}{2}}(t_n-t_{j+1})\right)-\sin\left(\lambda_i^{\frac{\alpha}{2}}(t_n-r)\right)\right)\\
&~~~~\times\left(\sin\left(\lambda_i^{\frac{\alpha}{2}}(t_n-r)\right)-\sin\left(\lambda_i^{\frac{\alpha}{2}}(t_n-t_j)\right)\right)(t_{j+1}-r)^{2H-1}\mathrm{d}r\\
&~~~~+2H(2H-1)\sum^\infty_{i=1}\lambda_i^{-\alpha}q_i\sum^{j=n-1}_{j=0}\int^{t_{j+1}}_{t_{j}}\int^{s}_{t_{j}}\left(\sin\left(\lambda_i^{\frac{\alpha}{2}}(t_n-s)\right)-\sin\left(\lambda_i^{\frac{\alpha}{2}}(t_n-r)\right)\right)\\
&~~~~\times\left(\sin\left(\lambda_i^{\frac{\alpha}{2}}(t_n-r)\right)-\sin\left(\lambda_i^{\frac{\alpha}{2}}(t_n-t_j)\right)\right)(s-r)^{2H-2}\mathrm{d}s\mathrm{d}r\\
&\lesssim\sum^\infty_{i=1}\lambda_i^{-\alpha}q_i\sum^{j=n-1}_{j=0}\int^{t_{j+1}}_{t_{j}}\left(\lambda_i^{\frac{\alpha}{2}}(t_{j+1}-r)\right)^{\frac{2\kappa+\alpha-2H\alpha}{\alpha}}(t_{j+1}-r)^{2H-1}\mathrm{d}r\\
&~~~~+\sum^\infty_{i=1}\lambda_i^{-\alpha}q_i\sum^{j=n-1}_{j=0}\int^{t_{j+1}}_{t_{j}}\int^{s}_{t_{j}}\left(\lambda_i^{\frac{\alpha}{2}}(s-r)\right)^{\frac{2\kappa+\alpha-2H\alpha}{\alpha}}(s-r)^{2H-2}\mathrm{d}s\mathrm{d}r\\
&\lesssim\tau^{2H-1}\sum^\infty_{i=1}\lambda_i^{-\alpha}q_i\left(\lambda_i^{\frac{\alpha}{2}}\tau\right)^{\frac{2\kappa+\alpha-2H\alpha}{\alpha}}+\sum^\infty_{i=1}\lambda_i^{\kappa-\frac{\alpha}{2}-H\alpha}q_i\sum^{j=n-1}_{j=0}\int^{t_{j+1}}_{t_{j}}\int^{s}_{t_{j}}(s-r)^{\frac{2\kappa}{\alpha}-1}\mathrm{d}s\mathrm{d}r\\
&\lesssim\tau^{\frac{2\kappa}{\alpha}}\left\|A^{\rho}Q^{\frac{1}{2}}\phi_{i}(x)\right\|^2
\end{split}
\end{equation*}
and

\begin{equation*}
\begin{split}
J_{12}&=\sum^\infty_{i=1}q_i\sum^{j=n-1}_{j=0}\int^{t_{j+1}}_{t_{j}}\int^{t_{j+1}}_{t_{j}}\cos\left(\lambda_i^{\frac{\alpha}{2}}(t_n-s)\right)\cos\left(\lambda_i^{\frac{\alpha}{2}}(t_n-r)\right)(s-t_j)^{2H}\mathrm{d}s\mathrm{d}r\\
&=2H\sum^\infty_{i=1}\lambda_i^{-\alpha}q_i\sum^{j=n-1}_{j=0}\int^{t_{j+1}}_{t_{j}}\left(\sin\left(\lambda_i^{\frac{\alpha}{2}}(t_n-s)\right)-\sin\left(\lambda_i^{\frac{\alpha}{2}}(t_n-t_{j+1})\right)\right)(s-t_j)^{2H-1}\mathrm{d}s\\
&~~~~\times\left(\sin\left(\lambda_i^{\frac{\alpha}{2}}(t_n-t_{j})\right)-\sin\left(\lambda_i^{\frac{\alpha}{2}}(t_n-t_{j+1})\right)\right)\\
&\lesssim\sum^\infty_{i=1}\lambda_i^{-\alpha}q_i\sum^{j=n-1}_{j=0}\int^{t_{j+1}}_{t_{j}}\left(\lambda_i^{\frac{\alpha}{2}}(t_{j+1}-s)\right)^{\frac{2\kappa+\alpha-2H\alpha}{\alpha}}(s-t_j)^{2H-1}\mathrm{d}s\\
&\lesssim\tau^{\frac{2\kappa}{\alpha}}\sum^\infty_{i=1}\left\|A^{\rho}Q^{\frac{1}{2}}\phi_{i}(x)\right\|^2.
\end{split}
\end{equation*}
For $J_2$, using the fact $k<j$, we have $t_k\le r\le t_j\le s$. Similar to the estimates of $J_{11}$ and $J_{12}$, we have
\begin{equation*}
\begin{split}
J_2&=\sum^\infty_{i=1}q_i\sum^{j=n-1}_{j=1}\sum^{k=j-1}_{k=0}\int^{t_{j+1}}_{t_{j}}\int^{t_{k+1}}_{t_{k}}\cos\left(\lambda_i^{\frac{\alpha}{2}}(t_n-s)\right)\cos\left(\lambda_i^{\frac{\alpha}{2}}(t_n-r)\right)\\
&~~~~\times\left[-(s-r)^{2H}+(s-t_k)^{2H}+(t_j-r)^{2H}-(t_j-t_k)^{2H}\right]\mathrm{d}s\mathrm{d}r\\
&=\sum^\infty_{i=1}q_i\sum^{j=n-1}_{j=1}\sum^{k=j-1}_{k=0}\int^{t_{j+1}}_{t_{j}}\int^{t_{k+1}}_{t_{k}}\cos\left(\lambda_i^{\frac{\alpha}{2}}(t_n-s)\right)\cos\left(\lambda_i^{\frac{\alpha}{2}}(t_n-r)\right)\\
&~~~~\times\left[2H\int^{r}_{t_{k}}(s-r_1)^{2H-1}\mathrm{d}r_1-2H\int^{r}_{t_{k}}(t_j-r_1)^{2H-1}\mathrm{d}r_1\right]\mathrm{d}s\mathrm{d}r\\
&=2H\sum^\infty_{i=1}q_i\sum^{j=n-1}_{j=1}\sum^{k=j-1}_{k=0}\int^{t_{j+1}}_{t_{j}}\int^{t_{k+1}}_{r_1}\cos\left(\lambda_i^{\frac{\alpha}{2}}(t_n-s)\right)\cos\left(\lambda_i^{\frac{\alpha}{2}}(t_n-r)\right)\\
&~~~~\times\left[\int^{t_{k+1}}_{t_{k}}(s-r_1)^{2H-1}\mathrm{d}r_1-\int^{t_{k+1}}_{t_{k}}(t_j-r_1)^{2H-1}\mathrm{d}r_1\right]\mathrm{d}s\mathrm{d}r\\
&=2H\sum^\infty_{i=1}q_i\lambda_i^{-\frac{\alpha}{2}}\sum^{j=n-1}_{j=1}\sum^{k=j-1}_{k=0}\int^{t_{j+1}}_{t_{j}}\left(\sin\left(\lambda_i^{\frac{\alpha}{2}}(t_n-r_1)\right)-\sin\left(\lambda_i^{\frac{\alpha}{2}}(t_n-t_{k+1})\right)\right)\\
&~~~~\times\cos\left(\lambda_i^{\frac{\alpha}{2}}(t_n-s)\right)\left[\int^{t_{k+1}}_{t_{k}}(s-r_1)^{2H-1}\mathrm{d}r_1-\int^{t_{k+1}}_{t_{k}}(t_j-r_1)^{2H-1}\mathrm{d}r_1\right]\mathrm{d}s\\
&=2H\sum^\infty_{i=1}q_i\lambda_i^{-\alpha}\sum^{j=n-1}_{j=1}\sum^{k=j-1}_{k=0}\int^{t_{j+1}}_{t_{j}}\int^{t_{k+1}}_{t_{k}}\left(\sin\left(\lambda_i^{\frac{\alpha}{2}}(t_n-r_1)\right)-\sin\left(\lambda_i^{\frac{\alpha}{2}}(t_n-t_{k+1})\right)\right)\\
&~~~~\times\left(\sin\left(\lambda_i^{\frac{\alpha}{2}}(t_n-s)\right)-\sin\left(\lambda_i^{\frac{\alpha}{2}}(t_n-t_{j+1})\right)\right)(2H-1)(s-r_1)^{2H-2}\mathrm{d}r_1\mathrm{d}s\\
&\lesssim\tau^{\frac{2\kappa}{\alpha}+1-2H}\sum^\infty_{i=1}\lambda_i^{\kappa-\frac{\alpha}{2}-H\alpha}q_i\sum^{j=n-1}_{j=1}\sum^{k=j-1}_{k=0}\int^{t_{j+1}}_{t_{j}}\int^{t_{k+1}}_{t_{k}}(s-r_1)^{2H-2}\mathrm{d}r_1\mathrm{d}s\\
&=\tau^{\frac{2\kappa}{\alpha}+1-2H}\sum^\infty_{i=1}\lambda_i^{\kappa-\frac{\alpha}{2}-H\alpha}q_i\sum^{j=n-1}_{j=1}\int^{t_{j+1}}_{t_{j}}\int^{t_{j}}_{0}(s-r_1)^{2H-2}\mathrm{d}r_1\mathrm{d}s\\
&\lesssim\tau^{\frac{2\kappa}{\alpha}}\sum^\infty_{i=1}\left\|A^{\rho}Q^{\frac{1}{2}}\phi_{i}(x)\right\|^2.
\end{split}
\end{equation*}
For $\kappa>H\alpha$, similar to the above estimates, we have
\begin{equation*}
\begin{split}
J_1&=-2H\sum^\infty_{i=1}\lambda_i^{-\alpha}q_i\sum^{j=n-1}_{j=0}\int^{t_{j+1}}_{t_{j}}\left(\sin\left(\lambda_i^{\frac{\alpha}{2}}(t_n-t_{j+1})\right)-\sin\left(\lambda_i^{\frac{\alpha}{2}}(t_n-r)\right)\right)\\
&~~~~\times\left(\sin\left(\lambda_i^{\frac{\alpha}{2}}(t_n-r)\right)-\sin\left(\lambda_i^{\frac{\alpha}{2}}(t_n-t_j)\right)\right)(s-r)^{2H-1}\mathrm{d}r\\
&~~~~+2H(2H-1)\sum^\infty_{i=1}\lambda_i^{-\alpha}q_i\sum^{j=n-1}_{j=0}\int^{t_{j+1}}_{t_{j}}\int^{s}_{t_{j}}\left(\sin\left(\lambda_i^{\frac{\alpha}{2}}(t_n-s)\right)-\sin\left(\lambda_i^{\frac{\alpha}{2}}(t_n-r)\right)\right)\\
&~~~~\times\left(\sin\left(\lambda_i^{\frac{\alpha}{2}}(t_n-r)\right)-\sin\left(\lambda_i^{\frac{\alpha}{2}}(t_n-t_j)\right)\right)(s-r)^{2H-2}\mathrm{d}s\mathrm{d}r\\
&~~~~+2H\sum^\infty_{i=1}\lambda_i^{-\alpha}q_i\sum^{j=n-1}_{j=0}\int^{t_{j+1}}_{t_{j}}\left(\sin\left(\lambda_i^{\frac{\alpha}{2}}(t_n-s)\right)-\sin\left(\lambda_i^{\frac{\alpha}{2}}(t_n-t_{j+1})\right)\right)(s-t_j)^{2H-1}\mathrm{d}s\\
&~~~~\times\left(\sin\left(\lambda_i^{\frac{\alpha}{2}}(t_n-t_{j})\right)-\sin\left(\lambda_i^{\frac{\alpha}{2}}(t_n-t_{j+1})\right)\right)\\
&\lesssim\sum^\infty_{i=1}\lambda_i^{-\alpha}q_i\sum^{j=n-1}_{j=0}\int^{t_{j+1}}_{t_{j}}\left(\lambda_i^{\frac{\alpha}{2}}(t_{j+1}-r)\right)^{\min\{\frac{2\kappa-2H\alpha}{\alpha},1\}}\left(\lambda_i^{\frac{\alpha}{2}}(r-t_j)\right)(s-r)^{2H-1}\mathrm{d}r\\
&~~~~+\sum^\infty_{i=1}\lambda_i^{-\alpha}q_i\sum^{j=n-1}_{j=0}\int^{t_{j+1}}_{t_{j}}\int^{s}_{t_{j}}\left(\lambda_i^{\frac{\alpha}{2}}(s-r)\right)\left(\lambda_i^{\frac{\alpha}{2}}(r-t_j)\right)^{\min\{\frac{2\kappa-2H\alpha}{\alpha},1\}}(s-r)^{2H-2}\mathrm{d}s\mathrm{d}r\\
&~~~~+\sum^\infty_{i=1}\lambda_i^{-\alpha}q_i\sum^{j=n-1}_{j=0}\int^{t_{j+1}}_{t_{j}}\left(\lambda_i^{\frac{\alpha}{2}}(t_{j+1}-s)\right)^{\min\{\frac{2\kappa-2H\alpha}{\alpha},1\}}(s-t_j)^{2H-1}\mathrm{d}s\left(\lambda_i^{\frac{\alpha}{2}}\tau\right)\\
&\lesssim\tau^{\min\{\frac{2\kappa}{\alpha},1+2H\}}\sum^\infty_{i=1}\left\|A^{\rho}Q^{\frac{1}{2}}\phi_{i}(x)\right\|^2
\end{split}
\end{equation*}
and
\begin{equation*}
\begin{split}
J_2&=2H\sum^\infty_{i=1}q_i\lambda_i^{-\alpha}\sum^{j=n-1}_{j=1}\sum^{k=j-1}_{k=0}\int^{t_{j+1}}_{t_{j}}\int^{t_{k+1}}_{t_{k}}\left(\sin\left(\lambda_i^{\frac{\alpha}{2}}(t_n-r_1)\right)-\sin\left(\lambda_i^{\frac{\alpha}{2}}(t_n-t_{k+1})\right)\right)\\
&~~~~\times\left(\sin\left(\lambda_i^{\frac{\alpha}{2}}(t_n-s)\right)-\sin\left(\lambda_i^{\frac{\alpha}{2}}(t_n-t_{j+1})\right)\right)(2H-1)(s-r_1)^{2H-2}\mathrm{d}r_1\mathrm{d}s\\
&\lesssim\sum^\infty_{i=1}q_i\lambda_i^{-\alpha}\sum^{j=n-1}_{j=1}\sum^{k=j-1}_{k=0}\int^{t_{j+1}}_{t_{j}}\int^{t_{k+1}}_{t_{k}}\left(\lambda_i^{\frac{\alpha}{2}}(t_{k+1}-r_1)\right)^{\min\{\frac{2\kappa-2H\alpha}{\alpha},1\}}\\
&~~~~\times\left(\lambda_i^{\frac{\alpha}{2}}(t_{j+1}-s)\right)(s-r_1)^{2H-2}\mathrm{d}r_1\mathrm{d}s\\
&\lesssim\tau^{\min\{\frac{2\kappa-2H\alpha}{\alpha},1\}+1}\sum^\infty_{i=1}\lambda_i^{\kappa-\frac{\alpha}{2}-H\alpha}q_i\sum^{j=n-1}_{j=1}\int^{t_{j+1}}_{t_{j}}\int^{t_{j}}_{0}(s-r_1)^{2H-2}\mathrm{d}r_1\mathrm{d}s\\
&\lesssim\tau^{\min\{\frac{2\kappa}{\alpha},1+2H\}}\sum^\infty_{i=1}\left\|A^{\rho}Q^{\frac{1}{2}}\phi_{i}(x)\right\|^2.
\end{split}
\end{equation*}
The proof of Proposition \ref{4prop:2} is completed by combining the preceding estimates.

\end{proof}

Last, let's consider the strong error estimates of the numerical approximation of scheme \eqref{4eq:4.2}. We obtain the following inequalities by using triangle inequality, Theorem \ref{4th:1} and Proposition \ref{4prop:2}.

\begin{theorem}\label{4th:2}
Let $\kappa=\frac{\alpha}{2}+H\alpha+2\rho$, $0<\kappa<\alpha+\frac{1}{2}$, $\left\|u_0\right\|_{L^2\left(\Omega,\mathbb{H}^\kappa\right)}<\infty$, $\left\|v_0\right\|_{L^2\left(\Omega,\mathbb{H}^{\kappa-\alpha}\right)}<\infty$, assumptions \ref{as:2.1}-\ref{as:2.2} hold, and $e_n$ is shown in \eqref{4eq:4.4}, then
\begin{equation*}
\left\|e_n\right\|_{L^2\left(\Omega,\mathbb{H}^0\right)}\lesssim\left\{\begin{split}\tau^{\frac{\kappa}{\alpha}}\left(\left\|u_0\right\|_{L^2\left(\Omega,\mathbb{H}^\kappa\right)}+\left\|v_0\right\|_{L^2\left(\Omega,\mathbb{H}^{\kappa-\alpha}\right)}\right),\quad &0<\kappa\le \frac{\alpha}{2}+H\alpha,\\
\tau^{H+\frac{1}{2}}\left(\left\|u_0\right\|_{L^2\left(\Omega,\mathbb{H}^\kappa\right)}+\left\|v_0\right\|_{L^2\left(\Omega,\mathbb{H}^{\kappa-\alpha}\right)}\right),\quad &\frac{\alpha}{2}+H\alpha <\kappa< \alpha+\frac{1}{2}.
\end{split}\right.
\end{equation*}
\end{theorem}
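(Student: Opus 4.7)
The plan is to decompose the error representation \eqref{4eq:4.4} as $e_{n+1}=E^{\mathrm{d}}_{n+1}+E^{\mathrm{s}}_{n+1}$, where $E^{\mathrm{s}}_{n+1}$ denotes the second sum (the stochastic-convolution approximation error) and $E^{\mathrm{d}}_{n+1}$ the first sum (the drift-integral approximation error). The noise term $E^{\mathrm{s}}_{n+1}$ is bounded immediately by Proposition \ref{4prop:2}, which produces exactly the piecewise rate appearing in the statement. It then remains to control $E^{\mathrm{d}}_{n+1}$ in terms of the previous errors and close the recursion with a discrete Gronwall argument.

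To analyze $E^{\mathrm{d}}_{n+1}$, I insert two telescoping terms to split each summand as
\begin{equation*}
\begin{split}
&A^{-\frac{\alpha}{2}}\mathscr{S}(t_{n+1}-s)f(u(s))-A^{-\frac{\alpha}{2}}\mathscr{S}(t_{n+1}-t_j)f(u_j)\\
&=A^{-\frac{\alpha}{2}}\left[\mathscr{S}(t_{n+1}-s)-\mathscr{S}(t_{n+1}-t_j)\right]f(u(s))\\
&\quad+A^{-\frac{\alpha}{2}}\mathscr{S}(t_{n+1}-t_j)\left[f(u(s))-f(u(t_j))\right]\\
&\quad+A^{-\frac{\alpha}{2}}\mathscr{S}(t_{n+1}-t_j)\left[f(u(t_j))-f(u_j)\right].
\end{split}
\end{equation*}
For the first piece I use the elementary interpolation $|\sin a-\sin b|\lesssim |a-b|^{\min\{\kappa/\alpha,1\}}$ applied to the spectral coefficients, together with the a priori bound on $\|f(u(s))\|_{L^2(\Omega,\mathbb{H}^0)}$ derived from Assumption \ref{as:2.2} and Theorem \ref{4th:1}. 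For the second piece I invoke Lipschitz continuity of $f$ combined with the H\"older-in-time estimate $\|u(s)-u(t_j)\|_{L^2(\Omega,\mathbb{H}^0)}\lesssim(s-t_j)^{\min\{\kappa/\alpha,1\}}$ from Theorem \ref{4th:1}. After integrating over $[t_j,t_{j+1}]$ and summing in $j\le n$, each of these two pieces contributes a term of order $\tau^{\min\{\kappa/\alpha,1\}}$ to $\|E^{\mathrm{d}}_{n+1}\|_{L^2(\Omega,\mathbb{H}^0)}$. The third piece is bounded by $\|e_j\|_{L^2(\Omega,\mathbb{H}^0)}$ via Lipschitz and contributes the recursive term $\tau\sum_{j=0}^{n}\|e_j\|_{L^2(\Omega,\mathbb{H}^0)}$.

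Combining these bounds with Proposition \ref{4prop:2} yields
\begin{equation*}
\|e_{n+1}\|_{L^2(\Omega,\mathbb{H}^0)}\lesssim R(\tau)+\tau\sum_{j=0}^{n}\|e_j\|_{L^2(\Omega,\mathbb{H}^0)},
\end{equation*}
where $R(\tau)=\tau^{\kappa/\alpha}$ for $0<\kappa\le \frac{\alpha}{2}+H\alpha$ and $R(\tau)=\tau^{H+\frac{1}{2}}$ for $\frac{\alpha}{2}+H\alpha<\kappa<\alpha+\frac{1}{2}$. In the second regime one has $\min\{\kappa/\alpha,1\}\ge H+\frac{1}{2}$, so the drift contribution is dominated by the stochastic one and the stated rate is indeed sharp. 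The discrete Gronwall inequality then closes the argument.

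The main obstacle is matching exponents in the first piece of the drift decomposition: the spectral estimate for $\|A^{-\alpha/2}[\mathscr{S}(t_{n+1}-s)-\mathscr{S}(t_{n+1}-t_j)]f(u(s))\|_{L^2(\Omega,\mathbb{H}^0)}$ must reach the full H\"older exponent $\min\{\kappa/\alpha,1\}$ of the mild solution, since any loss there would prevent the drift error from aligning with the stochastic rate $\tau^{H+\frac{1}{2}}$ in the high-regularity regime. Once this sharp pointwise bound on $\sin$ differences is combined correctly with the regularity of $f(u(s))$, the discrete Gronwall step and the comparison with Proposition \ref{4prop:2} become routine.
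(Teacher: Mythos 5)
Your proposal is correct and follows essentially the same route as the paper: the same three-term telescoping of the drift sum (merely performed in a different order), Proposition \ref{4prop:2} for the stochastic-convolution error, the H\"older estimate of Theorem \ref{4th:1} for the $f(u(s))-f(u(t_j))$ piece, and a discrete Gronwall closure. The only remark is that your announced ``main obstacle'' is harmless: spectrally $A^{-\frac{\alpha}{2}}\left[\mathscr{S}(t_{n+1}-s)-\mathscr{S}(t_{n+1}-t_j)\right]$ is bounded by $|s-t_j|\le\tau$ outright, since the factor $\lambda_i^{-\frac{\alpha}{2}}$ cancels the $\lambda_i^{\frac{\alpha}{2}}$ produced by the mean value theorem applied to the sine, so no fractional interpolation of the sine difference is needed and only $\left\|f\left(u(s)\right)\right\|_{L^2\left(\Omega,\mathbb{H}^0\right)}\lesssim1$ is required.
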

\begin{proof}
The above estimate follows from  the H\"older regularity estimates of $u(t)$, Proposition \ref{4prop:2}. From \eqref{4eq:4.4}, we have
\begin{equation*}
\begin{split}
\left\|e_{n}\right\|_{L^2\left(\Omega,\mathbb{H}^0\right)}&\lesssim\left\|\sum^{n-1}_{j=0}\int^{t_{j+1}}_{t_{j}} A^{-\frac{\alpha}{2}}\left(\mathscr{S}\left(t_{n}-s\right)f\left(u(s)\right)-\mathscr{S}\left(t_{n}-t_j\right)f\left(u_{j}\right)\right)\mathrm{d}s\right\|_{L^2\left(\Omega,\mathbb{H}^0\right)}\\
&~~~~+\left\|\sum^{n-1}_{j=0}\int^{t_{j+1}}_{t_{j}} \mathscr{C}\left(t_{n}-s\right)\left(B^Q_H(s)-B^Q_H(t_j)\right)\mathrm{d}s\right\|_{L^2\left(\Omega,\mathbb{H}^0\right)}\\
&\lesssim\sum^{n-1}_{j=0}\int^{t_{j+1}}_{t_{j}} \left\|A^{-\frac{\alpha}{2}}\left(\mathscr{S}\left(t_{n}-s\right)f\left(u(s)\right)-\mathscr{S}\left(t_{n}-s\right)f\left(u(t_{j})\right)\right)\right\|_{L^2\left(\Omega,\mathbb{H}^0\right)}\mathrm{d}s\\
&~~~~+\sum^{n-1}_{j=0}\int^{t_{j+1}}_{t_{j}} \left\|A^{-\frac{\alpha}{2}}\left(\mathscr{S}\left(t_{n}-s\right)f\left(u(t_{j})\right)-\mathscr{S}\left(t_{n}-t_j\right)f\left(u(t_{j})\right)\right)\right\|_{L^2\left(\Omega,\mathbb{H}^0\right)}\mathrm{d}s\\
&~~~~+\sum^{n-1}_{j=0}\int^{t_{j+1}}_{t_{j}} \left\|A^{-\frac{\alpha}{2}}\mathscr{S}\left(t_{n}-t_j\right)\left(f\left(u(t_{j})\right)-f\left(u_{j}\right)\right)\right\|_{L^2\left(\Omega,\mathbb{H}^0\right)}\mathrm{d}s\\
&~~~~+\tau^{\min\{\frac{\kappa}{\alpha},H+\frac{1}{2}\}}\\
&\lesssim\sum^{n-1}_{j=0}\int^{t_{j+1}}_{t_{j}} \left\|u(s)-u(t_{j})\right\|_{L^2\left(\Omega,\mathbb{H}^0\right)}\mathrm{d}s+\tau\sum^{n-1}_{j=0}\int^{t_{j+1}}_{t_{j}} \left\|f\left(u(t_{j})\right)\right\|_{L^2\left(\Omega,\mathbb{H}^0\right)}\mathrm{d}s\\
&~~~~+\sum^{n-1}_{j=0}\int^{t_{j+1}}_{t_{j}} \left\|u(t_{j})-u_{j}\right\|_{L^2\left(\Omega,\mathbb{H}^0\right)}\mathrm{d}s+\tau^{\min\{\frac{\kappa}{\alpha},H+\frac{1}{2}\}}\\
&\lesssim
\tau^{\min\left\{\frac{\kappa}{\alpha},1\right\}}\left(\left\|u_0\right\|_{L^2\left(\Omega,\mathbb{H}^\kappa\right)}+\left\|v_0\right\|_{L^2\left(\Omega,\mathbb{H}^{\kappa-\alpha}\right)}+1\right)\\
&~~~~+\tau\left(\left\|u_0\right\|_{L^2\left(\Omega,\mathbb{H}^\kappa\right)}+\left\|v_0\right\|_{L^2\left(\Omega,\mathbb{H}^{\kappa-\alpha}\right)}+1\right)\\
&~~~~+\sum^{n-1}_{j=0}\int^{t_{j+1}}_{t_{j}} \left\|e_{j}\right\|_{L^2\left(\Omega,\mathbb{H}^0\right)}\mathrm{d}s+\tau^{\min\left\{\frac{\kappa}{\alpha},H+\frac{1}{2}\right\}}\\
&\lesssim
\tau^{\min\left\{\frac{\kappa}{\alpha},H+\frac{1}{2}\right\}}\left(\left\|u_0\right\|_{L^2\left(\Omega,\mathbb{H}^\kappa\right)}+\left\|v_0\right\|_{L^2\left(\Omega,\mathbb{H}^{\kappa-\alpha}\right)}+1\right)+\tau\sum^{n-1}_{j=0}\left\|e_{j}\right\|_{L^2\left(\Omega,\mathbb{H}^0\right)}.
\end{split}
\end{equation*}
In the second inequality, we use triangle inequality and Proposition \ref{4prop:2}. According to Theorem \ref{4th:1}, we arrive at the fourth inequality. Then using the discrete Gronwall inequality leads to
\begin{equation*}
\begin{split}
\left\|e_{n}\right\|_{L^2\left(\Omega,\mathbb{H}^0\right)}&\lesssim
\tau^{\min\left\{\frac{\kappa}{\alpha},H+\frac{1}{2}\right\}}\left(\left\|u_0\right\|_{L^2\left(\Omega,\mathbb{H}^\kappa\right)}+\left\|v_0\right\|_{L^2\left(\Omega,\mathbb{H}^{\kappa-\alpha}\right)}+1\right).
\end{split}
\end{equation*}
\end{proof}

\begin{remark}
In fact, the numerical method \eqref{4eq:4.2} is still effective when $H\in[\frac{1}{2},1)$ and $\kappa=\alpha+2\rho$. For $0<\kappa<\alpha$, the proposed scheme \eqref{4eq:4.2} possesses the strong convergence order $\frac{\kappa}{\alpha}$. As $\alpha\le\kappa<\alpha+\frac{1}{2}$, the strong convergence order of the scheme \eqref{4eq:4.2} can reach $1$.
\end{remark}

\section{Numerical Experiments} \label{4sec:5}
Using scheme \eqref{4eq:4.2}, we solve \eqref{4eq:5.1} to illustrate the convergence rates in Theorem \ref{4th:2} and investigate the effect of the parameters $\rho$ and $H$ on the convergence.
\begin{equation}\label{4eq:5.1}
\left\{
\begin{array}{ll}
\mathrm{d} \dot{u}(x,t)+A^\alpha  u(x,t)\mathrm{d}t=\left(\cos(u(x,t))+u(x,t)\right)\mathrm{d}t+\mathrm{d}B^Q_{H}(x,t),& \quad \mathrm{in} \ (0,1)\times(0,T],\\
u(x,0)=\frac{\sqrt{2}}{4}\sin(\pi x),v(x,0)=\frac{\sqrt{2}}{2}\sin(3\pi x),& \quad  \mathrm{in}\ (0,1),\\
u(0,t)=u(1,t)=0.
\end{array}
\right.
\end{equation}
We use the Ckolesky method to generate the trajectories of FBM. Let $u_n$ denote the discrete solution at time $t_n=n \tau$ with fixed time step size $\tau=\frac{T}{N}$. The following formula is used to calculates the strong convergence rates in time:

\begin{eqnarray*}
\textrm{convergence rate}=
 \frac{\ln\left(\sqrt{\mathrm{E}\left[\left\|u_{N}-u_{2N}\right\|^2\right]}\bigg/
\sqrt{\mathrm{E}\left[\left\|u_{2N}-u_{4N}\right\|^2\right]}\right)}{\ln2}.
\end{eqnarray*}
We apply a Monte Carlo method with $2000$ trajectories to approximate the expectation of stochastic process. The space discretization of \eqref{4eq:5.1} follows from the spectral Galerkin method with first $1800$ orthonormal bases, which ensures that the time error is the dominant one.

\begin{table}[H]
\renewcommand\arraystretch{1.6}
\caption{Time convergence rates with $H=0.4$, $T=0.2$ and $\alpha=0.8$}\label{4table:1}
\centering
\begin{tabular}{c c c c c c c c }
\hline
$N$ & $\kappa=0.32$ & Rate &$\kappa=0.52$ &Rate & $\kappa=0.72$ &Rate  \\
\midrule[1.5pt]
  4&0.557e-02&     & 2.301e-02&     &1.204e-02&          \\
  8&0.418e-02& 0.414& 1.457e-02& 0.659&6.668e-03&0.853\\
  16&0.313e-02& 0.417& 9.215e-03& 0.661&3.635e-03&0.875  \\
  \hline
\end{tabular}
\end{table}
First, we consider the case of $0<\kappa\le\frac{\alpha}{2}+H\alpha$. Then the theoretical convergence rate is $\frac{\kappa}{\alpha}$. To observe this convergence order, we choose $N=4,~8,~16,~32$ to approximate solutions $u(x,T)$. Table \ref{4table:1} presents the rates of convergence of the time discretization for varying $\kappa$ with the fixed parameters $\alpha=0.75$ and $H=0.4$. When $\sqrt{q_{i}}=\lambda_i^{-0.05},~\lambda_i^{-0.15}~,\lambda_i^{-0.25}$,  $\kappa$ is $0.32$, $0.52$ and $0.72$, respectively. Then the corresponding theoretical convergence rates are $0.4,~0.65,~0.9$, respectively. Numerical results indicate the strong convergence rate is approximately that given in Theorem \ref{4th:2}.

\begin{table}[H]
\renewcommand\arraystretch{1.6}
\caption{Time convergence rates with $\alpha=0.25$, $T=0.2$ and $\kappa=\frac{\alpha}{2}+H\alpha+0.3$}\label{4table:3}
\centering
\begin{tabular}{c c c c c c c c }
\hline
$N$ & $H=0.1$ & Rate &$H=0.25$ &Rate & $H=0.4$ &Rate  \\
\midrule[1.5pt]
  4&2.004e-02&     & 1.166e-02&     &7.262e-03&          \\
  8&1.311e-02& 0.612& 6.697e-03& 0.800&3.704e-03&0.971\\
  16&8.631e-03& 0.603& 3.882e-03& 0.787&1.950e-03&0.926  \\
  \hline
\end{tabular}
\end{table}
Next, we investigate the convergence order for $\frac{\alpha}{2}+H\alpha<\kappa<\alpha+\frac{1}{2}$. Table \ref{4table:3} displays the temporal convergence rates for three different noise intensity parameters $H=0.1,~0.25,~0.4$ with fixed $\sqrt{q_{i}}=\lambda_i^{-0.4}$. Numerical experiments show that as $H$ decreases, convergence rate increases. This observation completely supports our theoretical result in Theorem \ref{4th:2}.
\section{Conclusion} \label{4sec:6}
In this paper, we developed an efficient time discretization method for solving a SWE forced by FMB with Hurst parameter $H\in(0,\frac{1}{2})$. For $H\in(0,\frac{1}{2})$, we show the strong convergence of this method with uniform time step $\tau$, where the error is of order $O\left(\tau^{H+\min\left\{\frac{\kappa}{\alpha}-\frac{1}{2},\frac{1}{2}\right\}}\right)$. In the case $0<\kappa\le\frac{\alpha}{2}+H\alpha$, the strong convergence rate of the scheme \eqref{4eq:4.2} is equal to H\"older continuity index of $u(t)$. As $\frac{\alpha}{2}+H\alpha<\kappa<\alpha+\frac{1}{2}$, our method strongly converges with order $H+\frac{1}{2}$, which is smaller than H\"older continuity index of $u(t)$, because the strong convergence is limited by the approximation error of stochastic convolution.  In future work, we plan to study the optimal strong convergence order of time discretization for \eqref{4eq:1.1} when $H\in(0,\frac{1}{2})$ and $\frac{\alpha}{2}+H\alpha<\kappa<\alpha+\frac{1}{2}$.
\section*{Acknowledgments}
This work was supported by the Foundation of Hubei Provincial Department of Education (No. B2021255).

%% References with BibTeX database:
%

\bibliographystyle{plain}
\bibliography{myfile}

%% Authors are advised to use a BibTeX database file for their reference list.
%% The provided style file elsarticle-num.bst formats references in the required Procedia style

%% For references without a BibTeX database:

% \begin{thebibliography}{00}

%% \bibitem must have the following form:
%%   \bibitem{key}...
%%

% \bibitem{}

% \end{thebibliography}

\end{document}